\theoremstyle{plain} 
\newtheorem{thm}{Theorem}[section] 
\newtheorem{cor}[thm]{Corollary} 
\newtheorem{lem}[thm]{Lemma} 
\newtheorem{prop}[thm]{Proposition} 
\newtheorem{rmk}[thm]{Remark}
\theoremstyle{definition} 
\newcommand{\eps}{\varepsilon}
\numberwithin{equation}{section}
\author[D.~Mastrostefano]{Daniele Mastrostefano}
\address{University of Warwick, Mathematics Institute, Zeeman Building, Coventry, CV4 7AL, UK}
\email{Daniele.Mastrostefano@warwick.ac.uk}
\keywords{Product sets; random models; localised divisor functions; distribution of the number of prime factors.}
\subjclass[2010]{Primary: 11B99.}
\begin{document}

\title[Maximal random product sets]
      {On maximal product sets of random sets}\thanks{The author is funded by a Departmental Award and by an EPSRC Doctoral Training Partnership Award. The present work has been conducted when the author was a second year PhD student at the University of Warwick.}
  
\begin{abstract}
For every positive integer N and every $\alpha\in [0,1)$, let $B(N, \alpha)$ denote the probabilistic
model in which a random set $A\subset \{1,\dots,N\}$ is constructed by choosing independently
every element of $\{1,\dots,N\}$ with probability $\alpha$. We prove that, as $N\longrightarrow +\infty$, for every $A$ in $B(N, \alpha)$ we have $|AA|\ \sim |A|^2/2$ with probability $1-o(1)$, if and only if
$$\frac{\log(\alpha^2(\log N)^{\log 4-1})}{\sqrt{\log\log N}}\longrightarrow-\infty.$$
This improves a theorem of Cilleruelo, Ramana and Ramar\'e, who proved the above asymptotic between $|AA|$ and $|A|^2/2$ when $\alpha=o(1/\sqrt{\log N})$, and supplies a complete characterization of maximal product sets of random sets.
\end{abstract} 
   
\maketitle
\section{Introduction}
For every positive integer $N$ we indicate with $[N]:=\{1,\dots, N\}$ the set of all positive integers between $1$ and $N$. For every $\alpha\in [0,1)$, let $B(N, \alpha)$ denote the probabilistic
model in which a random set $A\subset [N]$ is constructed by choosing independently
every element of $[N]$ with probability $\alpha$.

We can interpret the random variable $|A| = \sum_{1\leq i\leq N}\textbf{1}_{i\in A}$ as a random variable with binomial distribution $\textrm{Bi}(N,\alpha)$. From this it follows that 
\begin{itemize}
\item $\mathbb{E}[|A|] =N\alpha;$
\item $\textrm{Var}(|A|) = N\alpha (1-\alpha);$
\item $\mathbb{E}[|A|^2] = (N\alpha)^2 + N\alpha(1-\alpha);$
\item $\textrm{Var}(|A|^2) = 4N^3\alpha^3(1-\alpha) + O(N^2\alpha^2);$
\item $\mathbb{E}[|A|^4] = N^4\alpha^4 + 6N^3\alpha^3(1-\alpha) + O(N^2\alpha^2).$
\end{itemize}
For an easy direct proof of the above equalities see the paper of Cilleruelo, Ramana and Ramar\'e \cite{CRR}.
In particular, it follows that
\begin{equation}
\label{asympmeanmaxsize}
\mathbb{E}[(|A|^2+|A|)/2]=\frac{N^2\alpha^2}{2}+N\alpha-\frac{N\alpha^2}{2}=\mathbb{E}[|A|^2/2]+O(N\alpha)
\end{equation}
and when $N\alpha\longrightarrow +\infty$ that
\begin{equation}
\label{importcons}
|A|\ \sim N\alpha\ \textrm{and}\ |A|^2\ \sim (N\alpha)^2	\sim |A|^2+|A|
\end{equation}
with probability $1-o(1)$, which is the content of \cite[Lemma 3.1]{CRR}.

Here for two sequences of random variables $X_1^{(N)},X_2^{(N)}$, we say $X_1^{(N)}\sim X_2^{(N)}$ if for any $\delta>0$ and $\eps>0$ there exists $N_0=N_0(\delta,\eps)\geq 1$ such that 
$$\mathbb{P}(|X_1^{(N)}-X_2^{(N)}|\geq \delta X_2^{(N)})\leq \eps,\ \textrm{if}\ N\geq N_0.$$
In short, we may write that for any $\delta>0$ 
$$\mathbb{P}(|X_1^{(N)}-X_2^{(N)}|\geq \delta X_2^{(N)})=o_{\delta}(1),\ \textrm{as}\ N\longrightarrow+\infty$$
and we will simply indicate with $X_1,X_2$ such two sequences of random variables, thus omitting the explicit dependence on $N$.

The probabilistic model $B(N,\alpha)$ has been introduced to study the expected size of product sets 
$$AA:=\{ab: a\in A, b\in A\}$$
and quotient sets 
$$A/A:=\{a/b: a\in A, b\in A\}.$$
To any set $A$ we can associate a quantity called the \emph{multiplicative energy} of $A$, defined as
$$E(A):=\{(a,b,c,d)\in A^4: ab=cd\}.$$
In the definition of $E(A)$ we tacitly assume that each quadruple is taken once without accounting for the multiplicity coming from possible symmetries (e.g. from swapping $a$ with $b$ or $c$ with $d$). The multiplicative energy thus counts the number of ``collisions'' between elements in the product or quotient sets. 

We can always find inside $E(A)$ the set of quadruples $(a,b,a,b)$ (without the multiplicity from swapping $a$ with $b$), which we denote as the set of ``trivial solutions'' (to the equation $ab=cd$), and the complementary set of ``non-trivial solutions''. The former has always size $(|A|^2+|A|)/2$. 

When the product set $AA$ has maximal cardinality it is intuitive to expect the set of trivial solutions in $E(A)$ to be much larger than the complementary set of non-trivial ones. In other words, when the number of non-trivial solutions inside $E(A)$ is somewhat ``small'' compared to $|A|^2$ we expect few collisions on average and thus a product set $AA$ of size as large as possible. 

In practice, in order to guarantee to have a random product set of maximal size, we need to put some conditions on $\alpha$ as a function of $N$.

The interesting range of $\alpha$ to consider is when $N\alpha$ is bounded away from $0$. More precisely, we can prove the following proposition.
\begin{prop}
\label{equalitycase}
Let $A$ be a random set in $B(N,\alpha)$ and assume that $\alpha=o(1/\sqrt{N})$. Then $|AA|\ =(|A|^2+|A|)/2$ with probability $1-o(1)$. 
\end{prop}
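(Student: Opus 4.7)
I would argue by a first-moment (Markov) argument on the number of ``non-trivial multiplicative coincidences'' in $A$. The starting point is the trivial upper bound $|AA|\le(|A|^2+|A|)/2$: every element of $AA$ has the form $ab$ for some unordered pair $\{a,b\}\subseteq A$ (with $a=b$ allowed), and there are exactly $(|A|^2+|A|)/2$ such pairs. Equality therefore fails precisely when there exist two distinct unordered pairs $\{a,b\}\ne\{c,d\}$ of elements of $A$ with $ab=cd$. Such a failure configuration is of one of two types: either (i) four distinct elements $a,b,c,d\in A$ with $ab=cd$ (so the two pairs are automatically disjoint), or (ii) three distinct elements $a,c,d\in A$ with $c\ne d$ and $a^2=cd$.

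Let $X=X_3+X_4$ count these bad configurations in $A$. By Markov's inequality it suffices to show $\mathbb{E}[X]=o(1)$, and the independence built into $B(N,\alpha)$ gives $\mathbb{E}[X_k]=\alpha^k N_k$, where $N_k$ is the number of such $k$-element configurations in $[N]$. The required divisor-type inputs are $N_3\ll N(\log N)^{O(1)}$, immediate from $\sum_{a\le N}\tau(a^2)\ll N(\log N)^{O(1)}$, and $N_4\ll N^2(\log N)^{O(1)}$, which follows from the standard parametrisation $(a,b,c,d)=(gu,vh,gv,uh)$ with $\gcd(u,v)=1$ and $g,h\le N/\max(u,v)$ (giving $N_4\ll\sum_{\gcd(u,v)=1}(N/\max(u,v))^2\ll N^2\log N$).

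The hypothesis $\alpha=o(1/\sqrt N)$ forces $\alpha^2N=o(1)$, so $\alpha^3N=\alpha\cdot\alpha^2N=o(1/\sqrt N)$ and the triple contribution is immediate: $\mathbb{E}[X_3]\ll\alpha^3N(\log N)^{O(1)}=o((\log N)^{O(1)}/\sqrt N)=o(1)$. The quadruple term gives $\mathbb{E}[X_4]\ll\alpha^4N^2(\log N)^{O(1)}=(\alpha^2N)^2(\log N)^{O(1)}$, and \emph{this is the main obstacle}: merely knowing $\alpha^2N\to 0$ does not automatically absorb the polylogarithmic factor coming from $N_4$. I would resolve it by a dyadic splitting of the count of bad quadruples according to the size of the common value $n=ab=cd$ (using the sharper divisor bounds available in each dyadic range), or alternatively by first conditioning on $|A|$, which under the hypothesis is $o(\sqrt N)$ with high probability by Chebyshev, and then bounding the probability that a uniform random subset of $[N]$ of that size contains one of the bad configurations.
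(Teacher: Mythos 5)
Your plan is the same as the paper's: bound $\mathbb{P}\big(|AA|<(|A|^2+|A|)/2\big)$ by a first-moment (union bound) estimate on the number of non-trivial multiplicative coincidences, split into the three-element configurations $a^2=cd$ and the four-element ones $ab=cd$, and invoke the divisor bounds $\sum_{a\le N}\tau(a^2)\ll N(\log N)^{O(1)}$ and $E([N])\ll N^2\log N$ for the two counts. Your treatment of the triple term is fine, and you are right to single out the quadruple term as the real difficulty: with the honest count $N_4\asymp E([N])\asymp N^2\log N$ the first moment is $\asymp\alpha^4 N^2\log N=(\alpha^2N)^2\log N$, and this need not be $o(1)$ under the bare hypothesis $\alpha=o(1/\sqrt N)$ (take $\alpha=1/(\sqrt N\log\log N)$). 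Unfortunately neither of your proposed workarounds removes the logarithm. Dyadic splitting in $n=ab=cd$ cannot help because the $\log N$ in $E([N])\asymp N^2\log N$ is genuine and comes from summing $\sum_{u<v,\ \gcd(u,v)=1}(N/\max(u,v))^2\asymp N^2\sum_v\phi(v)/v^2\asymp N^2\log N$ over the full range of $v$; no rearrangement of the count by the size of $n$ changes its total. Conditioning on $|A|=m$ with $m=o(\sqrt N)$ gives an expected number of bad quadruples $\asymp(m/N)^4N^2\log N=(m/\sqrt N)^4\log N$, which again is not forced to $0$.

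What your scepticism in fact uncovers is a slip in the paper's own proof, at exactly this point. The displayed chain
\[
\mathbb{P}(\Sigma(A)\ge 1)\le\alpha^4\sum_{ab\in[N][N]}\ \sum_{\substack{d\mid ab,\ d\ne a,b\\ d\le N,\ ab/d\le N}}1\ \le\ \alpha^4\sum_{n\le N^2}\tau_N(n)=\alpha^4N^2
\]
replaces the true quadruple count by $\sum_{n}\tau_N(n)$, but the number of ordered quadruples $(a,b,c,d)\in[N]^4$ with $ab=cd$ and $\{a,b\}\ne\{c,d\}$ is $\sum_n\tau_N(n)(\tau_N(n)-2)\asymp\sum_n\tau_N(n)^2=E([N])\asymp N^2\log N$: the sum over the \emph{first} factorisation $(a,b)$ of $n$ contributes an extra factor $\tau_N(n)$ that has been dropped. (Along the way the bound $\alpha^4$ also silently covers the three-element configurations, where the correct weight is $\alpha^3$; those are harmlessly small, but the missing $\tau_N(n)$ is not.) So a first-moment argument only gives $o(1)$ under the slightly stronger hypothesis $\alpha=o\big(1/(\sqrt N(\log N)^{1/4})\big)$; and a second-moment heuristic on the nearly disjoint collision events indexed by coprime pairs $(u,v)$ suggests that the conclusion genuinely fails for, say, $\alpha\asymp 1/(\sqrt N\log\log N)$. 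In short: the gap you flagged is real, it is not of your making, and it is not closed by either of your two suggested repairs; since the proposition is only used in the paper to justify the range restriction $N\alpha\to\infty$, strengthening its hypothesis by a factor $(\log N)^{1/4}$ is harmless for the main theorem.
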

We delay the proof of Proposition \ref{equalitycase} to section 3.
In light of the above result, from now on we assume $N\alpha\longrightarrow +\infty$. 

If we increase the value of $\alpha$ we might lose the equality stated in Proposition \ref{equalitycase}, but we could at least still have an \emph{asymptotic} equality. 
A sufficient condition to guarantee that has been given in the following result (see \cite[Theorem 1.2]{CRR}).
\begin{prop}
\label{productsetsthm}
Let $A$ be a random set in $B(N,\alpha)$. If $\alpha = o((\log N)^{-1/2})$, then we have $|AA|\ \sim |A|^2/2$ with probability $1- o(1)$.
\end{prop}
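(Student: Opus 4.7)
The plan is to bound $|AA|$ from below in terms of the multiplicative energy, then show that the non-trivial part of $E(A)$ is of smaller order than $|A|^2$ on average via a divisor-function estimate, and finally invoke Markov's inequality under the hypothesis $\alpha=o((\log N)^{-1/2})$.

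First I would prove the deterministic sandwich
\begin{equation*}
\frac{|A|^2+|A|}{2}-T(A)\;\leq\;|AA|\;\leq\;\frac{|A|^2+|A|}{2},
\end{equation*}
where $T(A):=|E(A)|-(|A|^2+|A|)/2$ counts the non-trivial solutions of $ab=cd$ in $A$. Writing $M_n:=\#\{\{a,b\}\subseteq A:ab=n\}$, so that $|AA|=\#\{n:M_n\geq 1\}$, $\sum_n M_n=(|A|^2+|A|)/2$ and $\sum_n M_n^2=|E(A)|$, the lower bound follows from the pointwise inequality $(M_n-1)_+\leq M_n(M_n-1)$ summed over $n$. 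Combined with \eqref{importcons}, the task reduces to proving $T(A)=o(|A|^2)$ with probability $1-o(1)$.

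Next I would estimate $\mathbb{E}[T(A)]$ directly. Each non-trivial element of $E(A)$ contributes at most $\alpha^4$ to the expectation (strictly less when some of the four coordinates coincide, but such configurations are of lower order). Setting $\tau_N(n):=\#\{(a,b)\in[N]^2:ab=n\}$, this yields
\begin{equation*}
\mathbb{E}[T(A)]\;\ll\;\alpha^4\sum_{n\leq N^2}\tau_N(n)^2.
\end{equation*}
A short computation, parametrising every solution of $ab=cd$ in $[N]^4$ as $a=ku$, $c=kv$, $b=vm$, $d=um$ with $\gcd(u,v)=1$, reduces the sum to $\sum_{\gcd(u,v)=1}\lfloor N/\max(u,v)\rfloor^2$ and gives $\sum_{n\leq N^2}\tau_N(n)^2\ll N^2\log N$. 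Hence $\mathbb{E}[T(A)]\ll(\alpha^2\log N)(N\alpha)^2$, and the hypothesis $\alpha=o((\log N)^{-1/2})$ is precisely what forces the factor $\alpha^2\log N$ to be $o(1)$. Markov's inequality together with $|A|\sim N\alpha$ from \eqref{importcons} then delivers $T(A)=o(|A|^2)$ with probability $1-o(1)$, and the sandwich above closes the proof.

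The main technical input is the divisor-function estimate $\sum_{n\leq N^2}\tau_N(n)^2\ll N^2\log N$: its single logarithmic saving is exactly what pins the threshold for $\alpha$ at $(\log N)^{-1/2}$. Pushing past this threshold, as the main theorem of the paper requires, cannot be achieved with a uniform bound on $\tau_N$ and will demand a finer, pointwise analysis tuned to the typical distribution of prime factors of $n\leq N^2$.
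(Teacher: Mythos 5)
Your proof is correct in outline and lands on the same central idea as the paper's --- reduce everything to bounding the expected number of non-trivial solutions of $ab=cd$ in $A$, then apply Markov --- but you derive the lower bound for $|AA|$ differently. The paper uses Cauchy--Schwarz, as in \eqref{CauchySchwarz}, to get $\bigl((|A|^2+|A|)/2\bigr)^2\leq |AA|\,E(A)$, whereas you use the pointwise bound $(M_n-1)_+\leq M_n(M_n-1)$ to obtain the additive sandwich $|AA|\geq (|A|^2+|A|)/2 - T(A)$ directly. Both are equivalent for the present purpose (with $T(A)$ playing exactly the role of $R(A)$ in the paper), though your version is perhaps a bit more elementary and makes the ``one logarithm of slack'' threshold at $\alpha\asymp (\log N)^{-1/2}$ especially transparent. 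The divisor-sum estimate $\sum_{n\leq N^2}\tau_N(n)^2\ll N^2\log N$, which you sketch via the $\gcd$ parametrisation, is the same as $E([N])\ll N^2\log N$ from \cite[Lemma 2.1]{CRR} that the paper invokes.

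One slip worth fixing: in bounding $\mathbb{E}[T(A)]$ you claim that non-trivial quadruples with coinciding coordinates contribute ``strictly less'' than $\alpha^4$ --- that is backwards. A quadruple $(a,a,c,d)$ with $a^2=cd$ and $a,c,d$ distinct contributes $\alpha^3$, which is \emph{larger} than $\alpha^4$. So the single inequality $\mathbb{E}[T(A)]\ll\alpha^4\sum_n\tau_N(n)^2$ is not literally valid; the diagonal contribution must be treated separately, as the paper does via
$$\alpha^3\sum_{a\leq N}\tau(a^2)\ll \alpha^3 N(\log N)^3 = o(\alpha^2 N^2),$$
which is negligible because $\alpha(\log N)^3/N\to 0$. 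Your instinct that these configurations are of lower order is right, but the reason is that there are far fewer of them (one free variable instead of two), not that each contributes less. With that term handled separately, the rest of your argument goes through unchanged.
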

If we increase the value of $\alpha$ too much we might lose the asymptotic equality stated in Proposition \ref{productsetsthm}. To see this, we first remind of Ford's result \cite[Corollary 3]{F3} on the multiplication table problem that we report below.
\begin{prop}
\label{multtable}
Let $M(x)$ be the number of positive integers $n\leq x$ which can be written as $n = m_1m_2$ with each $m_i\leq \sqrt{x}$. Then
$$M(x)\asymp \frac{x}{(\log x)^{\delta}(\log\log x)^{3/2}}\ \ \ (x\geq 2),$$
where 
$$\delta=1-\frac{1+\log\log 2}{\log 2}=0.086071....$$
\end{prop}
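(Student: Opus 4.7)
The plan is to reduce the multiplication table problem to a question about integers with a divisor in a short interval, and then to use the anatomical theory of divisors. The first step is the elementary observation that $n\leq x$ admits a factorisation $n=m_1m_2$ with $m_1,m_2\leq\sqrt{x}$ if and only if $n$ has a divisor $d$ in $[\max(1,n/\sqrt{x}),\sqrt{x}]$: set $d=m_1$, so $n/d=m_2\leq\sqrt{x}$ forces $d\geq n/\sqrt{x}$. Up to the $O(\sqrt{x})$ contribution from $n\leq\sqrt{x}$ (which is trivially factorable), we therefore have
$$M(x)=\#\{n\leq x : n \text{ has a divisor in } [n/\sqrt{x},\sqrt{x}]\},$$
and the problem reduces to estimating $H(x,y,2y):=\#\{n\leq x : \exists\, d\mid n \text{ with } y<d\leq 2y\}$ at scales $y\asymp\sqrt{x}$.

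Next, I would encode divisors by their prime factorisation. For $n$ squarefree with prime factors $p_1<\cdots<p_k$, each divisor corresponds to a subset $S\subseteq\{1,\ldots,k\}$ via $\log d=\sum_{j\in S}\log p_j$, so the existence of a divisor in a prescribed logarithmic window becomes a subset-sum hitting problem; the general case is handled by a standard reduction to the squarefree kernel. For a typical $n\leq x$ the Hardy--Ramanujan theorem gives $k\approx\log\log x$ and the log-scales $\log\log p_j$ distribute approximately as $j$, so the subset sum $\sum\varepsilon_j\log p_j$ behaves like a weighted sum of nearly independent Bernoulli variables. Asking this sum to lie in a short target interval is then a constrained random-walk problem, amenable to large-deviation and local-limit estimates.

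The final ingredient is a sharp entropy computation: the probability that such a weighted Bernoulli sum hits a prescribed target near $\log\sqrt{n}$ decays like $(\log x)^{-\delta}$, with rate $\delta=1-(1+\log\log 2)/\log 2$ obtained by Legendre-transforming the binary entropy $h(\lambda)=-\lambda\log\lambda-(1-\lambda)\log(1-\lambda)$ against the constraint; the additional factor $(\log\log x)^{-3/2}$ comes from a local limit theorem quantifying the chance of hitting the target exactly. The upper bound then follows by estimating moments of the divisor count in the target interval (Hooley-type $\Delta$-function estimates) and converting to a set cardinality via Cauchy--Schwarz, while the lower bound is produced by constructing explicit $n$ with primes prescribed at dyadic scales.

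The main obstacle is sharpness. The weaker bound $M(x)\ll x/(\log x)^{\delta-\eps}$ is considerably easier and was obtained before Ford by Tenenbaum and others; but pinning down the exact exponent $\delta$ together with the polylogarithmic factor $(\log\log x)^{-3/2}$, and establishing the matching lower bound, requires Ford's uniform local-limit machinery and a delicate combinatorial analysis of the anatomy of integers meeting the divisor condition. There is no soft route: the value of $\delta$ and the precise power $3/2$ both emerge only from the sharp form of the large-deviation and local-limit estimates.
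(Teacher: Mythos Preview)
The paper does not prove this proposition at all: it is quoted verbatim as Ford's result \cite[Corollary 3]{F3} and used as a black box (to derive the upper bound \eqref{multiplicationtablebound} on $|AA|$). No argument is supplied or expected in the paper.

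Your proposal is therefore not comparable to anything in the paper; it is instead a sketch of the strategy behind Ford's original proof. As such it is broadly accurate in spirit --- the reduction to $H(x,y,2y)$, the subset-sum/anatomy-of-integers viewpoint, the identification of the exponent $\delta$ via an entropy optimisation, and the local-limit origin of the $(\log\log x)^{-3/2}$ factor are all correct high-level ingredients. But it remains a heuristic outline rather than a proof: the genuinely hard content lies in the uniform two-sided estimates for $H(x,y,z)$ across the full relevant range, and your proposal does not carry out (or even precisely formulate) the moment calculations, the iterative construction for the lower bound, or the uniformity arguments that make Ford's theorem go through. If the intent is merely to cite the result, as the paper does, a reference to \cite{F3} suffices; if the intent is to supply an independent proof, substantially more detail would be required.
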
 
Hence, considering $AA\subset [N][N]$, we get the upper bound:
\begin{equation}
\label{multiplicationtablebound}
|AA|\ll \frac{|A|^2}{\alpha^2(\log N)^{\delta}(\log\log N)^{3/2}},
\end{equation}
from which we deduce that to have a random product set of maximal size we need 
$$\alpha\ll \frac{1}{(\log N)^{\delta/2}(\log\log N)^{3/4}}.$$
An important consequence of the above bound is that we need $\alpha\longrightarrow 0$, as $N\longrightarrow +\infty.$

From the work in \cite{CRR} it is not clear though whether the value $\alpha=o((\log N)^{-1/2})$ is sharp. Understanding asymptotics for the cardinality of product sets of random sets in $B(N,\alpha)$ could serve as a good heuristic for predicting the size of product sets of deterministic sets of a given cardinality $N\alpha$. For instance, for the set of prime numbers or shifted primes it has been proven in \cite[Theorem 1.3]{CRR} that their product set is maximal, as it happens for their corresponding random models. Also the set of shifted sums of two squares $Q_N-1:=\{a^2+b^2-1: 1\leq a\leq b\leq N\}$ has been analyzed. In this case however, in \cite[Theorem 1.5]{CRR} has been computed only the order of magnitude for the size of its product set. Moreover, by comparing it with its random counterpart, the authors expressed their feelings that the size of its product set should be asymptotic to $|Q_N-1|^2/2$, even though they were not able to prove neither the asymptotic for the deterministic product set nor that for the random one.\footnote{In particular, this last assertion does not follow from \cite[Theorem 1.2]{CRR} and it was not previously known.}

The situation for quotient sets appears to be instead much clearer. Indeed, it was proved in \cite[Theorem 1.1]{CRR} that the size of the quotient set of a random set is as large as possible as soon as $\alpha$ tends to $0$ as a function of $N$. Moreover, it can be shown that the condition $\alpha = o(1)$ cannot be removed in the quotient set case. More precisely, it has been shown by Cilleruelo and Guijarro-Ord\'o\~{n}ez\cite{CO} that when $\alpha$ is a fixed real number and $N\longrightarrow +\infty$, we have $|A/A|\ \sim c_\alpha |A|^2$ with probability $1 -o(1)$, for an explicit $c_\alpha < 1$.

Coming back to product sets, in the deterministic setting it has been raised the following question:

\emph{Is it true that whenever $A \subset \{1,\dots,N\}$ is such that $|AA|\ \sim  |A|^2/2$, as $N\longrightarrow +\infty$, then $|A| = o(N(\log N)^{-1/2})$?}

This was answered negatively by Ford \cite{F}, proving the following result.
\begin{prop}
\label{extremalproductsetthm}
Let $D > 7/2$. For each $N \geq 10$ there is a set $A \subset [N]$ of size 
$$|A| \geq \frac{N}{(\log N)^{\delta/2}(\log\log N)^D},$$
with $\delta$ as in Proposition \ref{multtable}, for which $|AA|\ \sim |A|^2/2$, as $N\longrightarrow +\infty$.
\end{prop}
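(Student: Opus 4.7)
The plan is to construct an explicit set $A \subset [N]$ of the required size whose multiplicative structure forces $|AA| \sim |A|^2/2$. Ford's theorem (Proposition \ref{multtable}) gives the a priori upper bound $|AA| \leq M(N^2) \asymp N^2/((\log N)^{\delta} (\log\log N)^{3/2})$, so if $|AA| \sim |A|^2/2$ then $|A|^2 \ll M(N^2)$, forcing $|A| \ll N/((\log N)^{\delta/2}(\log\log N)^{3/4})$. Since the target lower bound is $|A| \geq N/((\log N)^{\delta/2}(\log\log N)^D)$ with $D > 7/2$, there is a $(\log\log N)^{D-3/4}$ slack in the size of $|A|$ that can be exploited when controlling non-trivial multiplicative collisions.

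For the construction I would take $A$ as a carefully chosen subset of the integers responsible for the bound in Proposition \ref{multtable}. A natural candidate is
\begin{equation*}
A = \{n \leq N : \omega(n) = k,\ p \mid n \Rightarrow p \in (y, z]\},
\end{equation*}
with $k \approx \log\log N$ and $(y, z]$ an interval of primes tuned so that, by Landau or Sathe--Selberg type formulae, $|A| \geq N/((\log N)^{\delta/2}(\log\log N)^D)$. The parameters $k$, $y$, $z$ should be chosen via the same extremal analysis that yields the exponent $\delta$ in Proposition \ref{multtable}; in particular, the integers in $A$ then have a ``rigid'' prime-factor anatomy that limits how two of them can be paired up to give the same product.

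The main step, and the main obstacle, is to prove that the non-trivial multiplicative energy
\begin{equation*}
E^*(A) := \#\{(a,b,c,d) \in A^4 : ab = cd,\ \{a,b\} \neq \{c,d\}\}
\end{equation*}
satisfies $E^*(A) = o(|A|^2)$, which by a standard pigeonhole argument then implies $|AA| \sim |A|^2/2$. Every non-trivial collision admits the parametrisation $a = g\alpha$, $c = g\gamma$, $b = \gamma\beta$, $d = \alpha\beta$ with $g = \gcd(a, c)$ and $\gcd(\alpha, \gamma) = 1$. The difficulty is that, for a fixed $n = ab$ with roughly $2k$ prime factors, there are a priori up to $\binom{2k}{k} \leq 4^k$ ways of splitting the prime factors between the two sides. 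The bulk of these partitions must produce factors outside $[1, N]$ because of the size constraint, and quantifying this in a second-moment sense is exactly the hard step: it amounts to showing that a typical element of $AA$ has only $1 + o(1)$ representations as a product of two elements of $A$. This is the same phenomenon driving Proposition \ref{multtable}, so I would leverage Ford's analysis of the anatomy of integers with divisors in prescribed intervals, with the $(\log\log N)^{D - 3/4}$ slack in $|A|$ absorbing the polylogarithmic losses arising from the sum over $g$ and from the distribution of divisors.
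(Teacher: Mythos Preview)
The paper does not give a proof of this proposition; it only sketches Ford's argument from \cite{F}. Your plan shares the broad outline with that sketch --- build an arithmetically special set and control its non-trivial multiplicative energy --- but it omits a structural step that is essential in Ford's argument, and it mis-specifies the arithmetic of the construction.

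In Ford's proof one does \emph{not} try to establish $E^*(A)=o(|A|^2)$ directly for a deterministic set. One first builds a larger set $B$ whose elements have slightly \emph{fewer} prime factors than the normal value $\log\log N$ (not $\omega(n)\approx\log\log N$ as you write; the optimisation giving the exponent $\delta$ singles out $k\approx(\log\log N)/(2\log 2)$ for the factors), and proves only a weak comparison of the form $E^*(B)\ll |B|^2(\log\log N)^{c}$. One then passes to a \emph{random} subset $A\subset B$, retaining each element independently with probability $p\asymp(\log\log N)^{-c'}$: this depresses $E^*$ by roughly $p^4$ but $|A|^2$ only by $p^2$, so the extra $(\log\log N)^{c}$ is absorbed and $E^*(A)=o(|A|^2)$ holds with high probability. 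The $(\log\log N)^{D-3/4}$ slack you correctly identified is exactly what pays for this thinning, and it is why the statement needs $D>7/2$ rather than $D>3/4$.

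Your proposal instead asks for $E^*(A)=o(|A|^2)$ for the deterministic set itself, and you flag this as ``the hard step''. It is genuinely hard: for integers $n\le N$ with a fixed number of prime factors near $\log\log N$, a typical product $ab$ really does admit many factorisations back into two such integers, and the size constraint $d\le N$ alone does not force the average number of representations down to $1+o(1)$. Without the random thinning (or an equivalent decoupling device) I do not see how your outline closes; invoking ``Ford's analysis'' here is circular, since that analysis is precisely the three-step argument above, including the random selection.
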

The proof, as sketched in \cite{F}, goes as follows. First of all, he constructed a set $B$ with some special arithmetic properties, among which that all the elements in $B$ were integers with slightly fewer prime factors compared to their expected value. Then he established a lower bound on the size of $B$ to compare with an upper bound on the multiplicative energy $E(B)$, thus showing a lack of non-trivial solutions inside $E(B)$. Finally, he selected a thin random subset $A\subset B$ that had the desired properties, borrowing some ideas from the work in \cite{CRR}.

However, the above construction and its implications do not preclude the possibility that for a \emph{random} subset $A\subset [N]$ under the model $B(N,\alpha)$ with $|A|\neq o(N(\log N)^{-1/2})$ we still have $|AA|$ asymptotic to $|A|^2/2$ with probability $1-o(1)$, leaving open the following question: 

\emph{Is it true that the condition $\alpha=o((\log N)^{-1/2})$ in Proposition \ref{productsetsthm} is also necessary?}

This paper is aimed at negatively answering to such question.
\begin{thm}
\label{mainthm}
Let $A$ be a random set in $B(N,\alpha)$, with $\alpha\in[0,1)$. Then, we have $|AA|\ \sim |A|^2/2$ with probability $1-o(1)$, as $N\longrightarrow +\infty$, if and only if
$$\frac{\log(\alpha^2(\log N)^{\log 4-1})}{\sqrt{\log\log N}}\longrightarrow -\infty.$$
\end{thm}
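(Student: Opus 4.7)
By linearity, $(|A|^2+|A|)/2 - |AA| = \sum_n (r_A(n)-1)_+$, where $r_A(n)$ is the number of unordered factorisations $\{a,b\}\subseteq A$ with $ab=n$ and $a,b\leq N$. Combined with \eqref{importcons}, it is enough to show that the non-trivial collision count $X:=\sum_n\binom{r_A(n)}{2}$ satisfies $X=o(|A|^2)$ with probability $1-o(1)$ under the stated condition on $\alpha$, and that $X\gg|A|^2$ with probability $1-o(1)$ if the condition fails.

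For the \emph{sufficiency}, a key elementary observation is that two distinct unordered factorisations of a fixed integer $n$ always involve four pairwise-disjoint integers of $[N]$; consequently, conditional on $n$, $r_A(n)$ is distributed as $\mathrm{Bi}(r_N(n),\alpha^2)$, where $r_N(n)$ counts unordered factorisations of $n$ with both factors in $[N]$. Thus $\mathbb{E}[X]=\alpha^4\sum_n\binom{r_N(n)}{2}$, and standard parametrisations give $\sum_n r_N(n)^2\asymp N^2\log N$, which by Markov only recovers the CRR threshold $\alpha=o((\log N)^{-1/2})$. To improve this, I would localise on the number of prime factors: fix $K:=\log\log N+y\sqrt{\log\log N}$ with $y=y(N)\to\infty$ slowly enough, and write $A=A'\cup A''$ with $A'':=A\cap\{a\leq N:\omega(a)>K\}$. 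By Hardy--Ramanujan $|A''|=o(|A|)$ with probability $1-o(1)$, and the collisions involving at least one element of $A''$ can be controlled by a first-moment estimate using $\sum_{b\leq N}d_N(ab)\ll d(a)\,N$ together with bounds on the moments of $d$ over the sparse set $\{a:\omega(a)>K\}$, yielding a total contribution $o(|A|^2)$.

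The remaining main contribution is $\mathbb{E}[X_{\mathrm{loc}}]\leq\alpha^4\sum_{n:\,\omega(n)\leq 2K}\binom{r_N(n)}{2}$, for which the central analytic estimate is
\[
\sum_{n\leq N^2,\;\omega(n)\leq 2K}r_N(n)^2 \;\ll\; N^2(\log N)^{\log 4 -1}e^{Cy\sqrt{\log\log N}}.
\]
The exponent $\log 4-1=2\log 2-1$ emerges from two facts: typical $n\leq N^2$ have $d(n)\asymp(\log N)^{\log 2}$ by the Erd\H{o}s--Kac/Hardy--Ramanujan normal order, and have density $\asymp(\log N\,\sqrt{\log\log N})^{-1}$; combining these gives $(\log N)^{2\log 2}/(\log N)=(\log N)^{\log 4-1}$, while the sub-exponential factor absorbs deviations of $\omega(n)$ up to $2K$. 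I would prove this either by a Selberg--Delange style analysis of the Dirichlet series of localised divisor functions restricted by $\omega$, or by the combinatorial techniques underlying Ford's study of $H(x,y,z)$ and the multiplication table. Dividing by $|A|^2\sim\alpha^2 N^2$ yields $\mathbb{E}[X_{\mathrm{loc}}]/|A|^2\ll\alpha^2(\log N)^{\log 4-1}e^{Cy\sqrt{\log\log N}}$, which is $o(1)$ under the hypothesis once $y(N)$ is chosen to grow more slowly than $-\log(\alpha^2(\log N)^{\log 4-1})/(2C\sqrt{\log\log N})$; Markov's inequality then completes the sufficiency.

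For the \emph{necessity}, one needs to reverse the argument. If the hypothesis fails along a sub-sequence, I would exhibit a positive density of $n\leq N^2$ with $\omega(n)$ in a prescribed window around $2\log\log N$ and $r_N(n)$ of the maximal order $(\log N)^{\log 2}$, and then a second moment computation on $X$ restricted to these $n$ shows $X\gg|A|^2$ with probability $1-o(1)$. The principal obstacle, as indicated, is the sharp localised moment estimate above: pinning down the exponent $\log 4-1$ in $\log N$ together with the sub-exponential tolerance $e^{O(y\sqrt{\log\log N})}$ is exactly what determines the threshold in the theorem, and requires a careful analytic treatment beyond what is available from crude divisor-sum bounds.
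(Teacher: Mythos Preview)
Your plan is broadly on the right track and correctly identifies the decisive analytic quantity --- a second moment of the localised divisor function $\tau_N$ (your $r_N$) restricted to integers with about $2\log\log N$ prime factors, with the exponent $\log 4-1$ emerging exactly as you say. The paper follows the same outline but realises it more simply in two places.

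\medskip
\textbf{Sufficiency.} Rather than splitting $A=A'\cup A''$ by the number of prime factors of the \emph{elements}, the paper splits the sum over \emph{products} $n\leq N^2$ according to $|\Omega(n)-2\log\log N|\leq M\sqrt{\log\log N}$ or not; the tail is handled directly by Erd\H{o}s--Kac for $\Omega$, avoiding any separate treatment of collisions involving a bad set $A''$. For the main term, the paper does not appeal to Selberg--Delange or to Ford's $H(x,y,z)$ machinery at all: it uses a one-line Rankin trick, inserting the weight $z^{\Omega(n)}$ with $z=1/2$ (paying a factor $2^{2\log\log N+M\sqrt{\log\log N}}$) and then evaluating $\sum_{n\leq N^2}\tau_N(n)^2 z^{\Omega(n)}$ by opening the square, parametrising $d_1=et_1$, $d_2=et_2$, and applying the elementary bound $\sum_{m\leq x}y^{\Omega(m)}\ll x(\log x)^{y-1}$ a few times. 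This yields exactly your displayed estimate with no analytic input beyond Chebyshev and Mertens; it is considerably lighter than either method you propose.

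\medskip
\textbf{Necessity.} Here the paper's argument is genuinely simpler than your second-moment plan. It first proves (Lemma~\ref{propasympmean}) that $|AA|\sim(|A|^2+|A|)/2$ with probability $1-o(1)$ forces $\mathbb{E}[|AA|]\sim\mathbb{E}[(|A|^2+|A|)/2]$, so it suffices to contradict $\mathbb{E}[X_A]=o(\alpha^2N^2)$ at the level of \emph{first moments}. Restricting to integers $n\leq N^2$ with $\Omega_2(n)$ in a window just above $2\log\log N$, the paper lower-bounds $\sum_{n}(\alpha^2\tau_N(n)/2-1)$ by showing (via Erd\H{o}s--Kac again) that $\sum\tau_N(n)\gg N^2$ on this set while the counting term $\sum 1$ is beaten by a geometric-series bound $\ll N^2(\log N)^{1-\log 4}e^{-cM\sqrt{\log\log N}}$. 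No variance computation is needed.

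\medskip
Two minor remarks on your setup: the claim that $r_A(n)\sim\mathrm{Bi}(r_N(n),\alpha^2)$ is not literally true when $n$ is a perfect square (one of the ``pairs'' is a singleton, contributing with probability $\alpha$), though this is harmless. And your proposed control of the $A''$-collisions via $\sum_{b\leq N}d_N(ab)\ll d(a)N$ is not obviously correct as stated; the paper sidesteps this entirely by never splitting the factors.
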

\begin{rmk}
In particular, closing a gap present in \cite{CRR}, for sets $A\in B(N,\alpha)$ with $\alpha\asymp 1/\sqrt{\log N}$, as for possible random models of the set of shifted sums of two squares $Q_N-1$, it follows that $|AA|\ \sim |A|^2/2$, with probability $1-o(1)$.
\end{rmk}
\section{Notations and preliminaries}
\subsection{Notations}
For a couple of real functions $f(x), g(x)$, with $g(x)>0$, we indicate with $f(x)=O(g(x))$ or $f(x)\ll g(x)$ that there exists an absolute constant $C>0$ such that $|f(x)|\leq Cg(x)$, for $x$ sufficiently large. When the implicit constant $C$ depends on a parameter $\alpha$ we instead write $f(x)\ll_{\alpha} g(x)$ or equivalently $f(x)=O_{\alpha}(g(x))$. Similarly, for a positive function $f(x)$ we say $f(x)\gg g(x)$ when instead there exists an absolute constant $c>0$ such that $|g(x)|\leq c f(x)$, for $x$ sufficiently large. Finally, when they both simultaneously hold we write $f(x)\asymp g(x)$.

Throughout, the letter $p$ is reserved for a prime number. We write $[a,b]$ to denote the least common multiple of integers $a,b$. All the other needed notations will be introduced in place.
\subsection{Preliminaries}
We now state some basic results that will be helpful in the next sections. The first of them regards upper bounds for the average of some positive multiplicative functions.
\begin{lem}
\label{Rankinestimate0}
Let $f$ be a non-negative multiplicative function. Suppose that $C$ is a constant such that
\begin{equation}
\label{sumoverprime}
\sum_{p\leq x}f(p)\log p\leq Cx
\end{equation}
for all $x\geq 1$ and that
\begin{equation}
\label{sumoverprimepowers}
\sum_{\substack{p^k:\\ k\geq 2}}\frac{f(p^k)k\log p}{p^k}\leq C.
\end{equation}
Then for $x\geq 2$,
$$\sum_{n\leq x}f(n)\ll (C+1)\frac{x}{\log x}\sum_{n\leq x}\frac{f(n)}{n}.$$
Moreover, for any positive multiplicative function $f(n)$ we also have
$$\sum_{n\leq x}\frac{f(n)}{n}\leq \prod_{p\leq x}\bigg(1+\frac{f(p)}{p}+\frac{f(p^2)}{p^2}+\cdots \bigg).$$
\end{lem}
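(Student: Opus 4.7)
The lemma consists of two inequalities, which I handle in sequence. For the main one I would follow the Hall--Tenenbaum template. Writing $S(x):=\sum_{n\leq x}f(n)$, $U(x):=\sum_{n\leq x}f(n)/n$, and $T_1(x):=\sum_{n\leq x}f(n)\log n$, the starting point is the elementary inequality $\log(x/n)\leq x/n$ for $n\leq x$, which rearranges to $\log x\leq\log n+x/n$; summing against $f(n)$ over $n\leq x$ yields
\begin{equation*}
S(x)\log x\leq T_1(x)+xU(x),
\end{equation*}
so that it suffices to produce the estimate $T_1(x)\ll(C+1)xU(x)$, the bound on $S(x)$ then following by dividing through by $\log x$.

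To bound $T_1(x)$ I would use the identity $\log=1*\Lambda$, swap orders of summation, and apply multiplicativity. For completely multiplicative $f$ this yields the clean Dirichlet-convolution factorisation
\begin{equation*}
T_1(x)=\sum_{d\leq x}f(d)\,\Psi(x/d),\qquad \Psi(y):=\sum_{p^k\leq y}f(p^k)\log p.
\end{equation*}
Hypothesis \eqref{sumoverprime} controls the $k=1$ contribution to $\Psi(y)$ by $Cy$, while \eqref{sumoverprimepowers} combined with the pointwise inequality $f(p^k)\log p\leq(p^k/2)\cdot k f(p^k)\log p/p^k$ (valid for $k\geq 2$) controls the $k\geq 2$ contribution by at most $Cy/2$; together these give $\Psi(y)\ll Cy$ and hence $T_1(x)\leq Cx\sum_{d\leq x}f(d)/d\ll CxU(x)$. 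For general multiplicative $f$ the same strategy applies after the more delicate prime-by-prime decomposition
\begin{equation*}
T_1(x)=\sum_p(\log p)\sum_{\substack{\nu\geq 1\\ p^\nu\leq x}}\nu f(p^\nu)\,T^{(p)}(x/p^\nu),\qquad T^{(p)}(y):=\sum_{\substack{m\leq y\\\gcd(m,p)=1}}f(m);
\end{equation*}
the $\nu\geq 2$ terms contribute $\leq CxU(x)$ directly from \eqref{sumoverprimepowers} paired with the trivial $T^{(p)}(y)\leq yU(x)$, and the $\nu=1$ term is then rearranged so as to mimic the completely-multiplicative convolution identity.

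The main obstacle is precisely the handling of this $\nu=1$ contribution for general $f$: a naive bound that first integrates \eqref{sumoverprime} by parts into $\sum_{p\leq x}f(p)(\log p)/p\ll C\log x$, then multiplies by $S(x/p)\leq(x/p)U(x)$, introduces a spurious factor of $\log x$ and would destroy the intended $1/\log x$ saving. The remedy is to keep the primes-times-logs bound in its un-integrated form $\sum_{p\leq y}f(p)\log p\leq Cy$ and to sum it against $S(x/p)$ (as in the completely-multiplicative convolution identity), with hypothesis \eqref{sumoverprimepowers} absorbing the discrepancy between $f(p)^\nu$ and $f(p^\nu)$. The second inequality of the lemma is purely formal: expanding $\prod_{p\leq x}(1+f(p)/p+f(p^2)/p^2+\cdots)$ produces, by unique factorisation and positivity of every term, a sum of $f(m)/m$ over all $m\geq 1$ whose prime factors lie in $[2,x]$; since every $n\leq x$ has all prime factors at most $x$, the partial sum $\sum_{n\leq x}f(n)/n$ is majorised term by term by this Euler product.
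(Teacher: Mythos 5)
The paper's proof of this lemma is a bare citation to Tenenbaum (ch.\ III, Thm.\ 3.5), so your proposal is really a reconstruction of the argument the paper delegates; it follows the standard Hall--Tenenbaum template and is correct in substance. Two points deserve tightening. For the $\nu=1$ contribution, the clean way to ``mimic the completely-multiplicative convolution identity'' is to interchange the order of summation so that the prime sum sits innermost with cutoff depending on the cofactor: dropping the coprimality restriction,
$$\sum_{p}f(p)\log p\cdot T^{(p)}(x/p)\;\leq\;\sum_{m\leq x}f(m)\sum_{p\leq x/m}f(p)\log p\;\leq\;C\sum_{m\leq x}f(m)\cdot\frac{x}{m}\;=\;CxU(x),$$
and this uses only \eqref{sumoverprime}. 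Your phrase ``sum it against $S(x/p)$'' is equivalent via Abel summation, but as stated it invites exactly the move you warned against (partial summation into $\sum_p f(p)(\log p)/p\ll C\log x$), so it is safer to display the interchange. Second, the clause that hypothesis \eqref{sumoverprimepowers} ``absorbs the discrepancy between $f(p)^\nu$ and $f(p^\nu)$'' is a red herring: at $\nu=1$ the two coincide, and the $\nu\geq 2$ contribution is already closed by the trivial bound $T^{(p)}(y)\leq yU(x)$ together with \eqref{sumoverprimepowers}; no further absorption occurs anywhere in the argument. The Euler-product half of the lemma is the standard term-by-term comparison and is fine as written.
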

\begin{proof}
This is \cite[ch. III, Theorem 3.5]{T}.
\end{proof}
In particular, we will need the following corollary.
\begin{cor}
\label{boundydiv}
Let $\Omega(n)$ be the number of prime factors of $n$ counted with multiplicity. For any fixed $0.1<y<1.9$ we have the uniform bound
$$\sum_{n\leq x}y^{\Omega(n)}\ll x(\log x)^{y-1}\ \ \ (x\geq 2),$$
with a uniformly bounded implicit constant.

Furthermore, if $\Omega_2(n)$ is the function which counts the number of prime factors of $n$ different from $2$ and counted with multiplicity, we have
$$\sum_{n\leq x}2^{\Omega_2(n)}\ll x\log x\ \ \ (x\geq 2).$$
\end{cor}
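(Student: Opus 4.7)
The plan is to deduce both estimates from Lemma \ref{Rankinestimate0} by applying it to the multiplicative functions $f(n)=y^{\Omega(n)}$ and $g(n)=2^{\Omega_2(n)}$ respectively, after verifying that conditions \eqref{sumoverprime} and \eqref{sumoverprimepowers} hold with a constant that is bounded uniformly in $y$ on the interval $(0.1,1.9)$.

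For the first bound, I would first check \eqref{sumoverprime} by writing $\sum_{p\leq x} y^{\Omega(p)}\log p = y\sum_{p\leq x}\log p \ll yx \ll x$ via Chebyshev's theorem, uniformly for $y<1.9$. Condition \eqref{sumoverprimepowers} becomes $\sum_p\sum_{k\geq 2}k(y/p)^k\log p$, and since $y/2<0.95$ the inner geometric series converges at every prime and the outer sum is dominated by $\sum_p \log p/p^2 <\infty$ plus a finite contribution from $p=2$; in particular the bound is uniform in $y\in(0.1,1.9)$. Lemma \ref{Rankinestimate0} then reduces the problem to controlling
\begin{equation*}
\sum_{n\leq x}\frac{y^{\Omega(n)}}{n} \leq \prod_{p\leq x}\Bigl(1-\frac{y}{p}\Bigr)^{-1}.
\end{equation*}
Taking logarithms and using $-\log(1-y/p)=y/p+O(y^2/p^2)$, Mertens' theorem yields $\sum_{p\leq x} y/p = y\log\log x+O(1)$ while the quadratic and higher tail is $O(1)$ uniformly in $y<1.9$. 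Hence this Euler product is $\asymp(\log x)^{y}$ and multiplying by $x/\log x$ gives $x(\log x)^{y-1}$, as desired.

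For the second bound, I would apply the same lemma to $g(n)=2^{\Omega_2(n)}$, which is multiplicative with $g(2^k)=1$ and $g(p^k)=2^k$ for odd primes $p$. Condition \eqref{sumoverprime} reads $\log 2+2\sum_{2<p\leq x}\log p\ll x$, which again follows from Chebyshev; condition \eqref{sumoverprimepowers} is a convergent sum because at $p=2$ the weight collapses to $\sum_{k\geq 2}k\log 2/2^k<\infty$, while at odd primes $2/p\leq 2/3$ ensures convergence of the geometric tail, and the global sum over $p\geq 3$ is controlled by $\sum_p\log p/p^2$. The relevant Euler product is now
\begin{equation*}
\sum_{n\leq x}\frac{2^{\Omega_2(n)}}{n}\leq \Bigl(\sum_{k\geq 0}\frac{1}{2^k}\Bigr)\prod_{2<p\leq x}\Bigl(1-\frac{2}{p}\Bigr)^{-1}\ll (\log x)^2,
\end{equation*}
by Mertens as before (with exponent $y=2$ at odd primes). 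Multiplying by $x/\log x$ produces the claimed $x\log x$.

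The only real subtlety is ensuring that the constants coming from Lemma \ref{Rankinestimate0} are uniform in $y$ on the closed subinterval $[0.1,1.9]\subset(0,2)$; this is the reason for the upper restriction $y<1.9$, which keeps the series in \eqref{sumoverprimepowers} convergent at $p=2$ with a uniform bound. Everything else is a routine application of Mertens' theorem, so no step should present a genuine difficulty.
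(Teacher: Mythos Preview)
Your proof is correct and is essentially the argument underlying the reference the paper cites. The paper simply quotes \cite[ch.~III, Theorem 3.7]{T} for the first estimate and remarks that the second follows by adapting that proof; what you have written is precisely that adaptation, carried out explicitly from Lemma~\ref{Rankinestimate0} (which is \cite[ch.~III, Theorem 3.5]{T}) together with Mertens' theorem, so the two approaches coincide in substance.
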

\begin{proof}
The first part is a special case of \cite[ch. III, Theorem 3.7]{T}, whereas the second part immediately follows from the quoted result by slightly adapting its proof.
\end{proof}
The next lemma is about some useful inequalities between the exponential function and truncations of its Taylor series expansion.
\begin{lem}
\label{Taylorexpexp}
Let 
$$T_n(x)=1+x+\frac{x^2}{2}+\cdots+\frac{x^n}{n!}$$
be the Taylor series for $\exp(x)$ at $0$ truncated after $n$ terms. Then for $x > 0$ we have
$$\exp(x)>T_n(x).$$
On the other hand, for $x<0$, we have
$$\left\{ \begin{array}{ll}
        \exp(x)>T_n(x) & \mbox{if $n$ odd};\\
        \exp(x)<T_n(x) & \mbox{if $n$ even}.\end{array} \right.$$ 
\end{lem}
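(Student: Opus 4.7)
The statement is a standard calculus fact about the sign of the remainder in the Taylor expansion of $\exp(x)$. The plan is to apply Taylor's theorem with the Lagrange form of the remainder, which immediately reduces the problem to determining the sign of $x^{n+1}$ according to the parity of $n$ and the sign of $x$.

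Concretely, since $\exp$ is infinitely differentiable on $\mathbb{R}$ with $(\exp)^{(k)}=\exp$ for every $k\geq 0$, Taylor's theorem with the Lagrange remainder gives, for every real $x$ and every $n\geq 0$, the existence of some $\xi$ strictly between $0$ and $x$ such that
$$\exp(x)-T_n(x)=\frac{\exp(\xi)}{(n+1)!}\,x^{n+1}.$$
Because $\exp(\xi)>0$ and $(n+1)!>0$, the sign of the left-hand side coincides with the sign of $x^{n+1}$.

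For $x>0$ we have $x^{n+1}>0$ for every $n\geq 0$, hence $\exp(x)>T_n(x)$, which gives the first part of the statement. For $x<0$ the quantity $x^{n+1}$ has sign $(-1)^{n+1}$: when $n$ is odd, $n+1$ is even, so $x^{n+1}>0$ and $\exp(x)>T_n(x)$; when $n$ is even, $n+1$ is odd, so $x^{n+1}<0$ and $\exp(x)<T_n(x)$. This settles the two cases for negative $x$.

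There is no real obstacle here; the only thing one needs to be careful about is the bookkeeping of the parity of $n$ versus the parity of $n+1$ when reading off the sign of $x^{n+1}$ for negative $x$. An equivalent (and equally short) alternative would be an induction on $n$, using that if $f_n(x):=\exp(x)-T_n(x)$ then $f_n'(x)=f_{n-1}(x)$ and $f_n(0)=0$, and then integrating from $0$ to $x$ while tracking the sign flip that the integration direction introduces when $x<0$; but the Lagrange remainder approach above is the most direct.
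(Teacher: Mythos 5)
Your proof is correct and follows the same approach as the paper: both invoke Taylor's theorem with the Lagrange form of the remainder and then read off the sign of $\exp(x)-T_n(x)$ from the sign of $x^{n+1}$. You simply spell out the parity bookkeeping a bit more explicitly than the paper does.
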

\begin{proof}
By the Taylor expansion of the exponential at $0$ with the Lagrange remainder, we have:
\begin{align*}
\exp(x)=T_n(x)+\frac{\exp(\xi)}{(n+1)!}x^{n+1},
\end{align*}
for a certain $\xi$ between $0$ and $x$. Since $\exp(\xi)\geq 0$ we immediately deduce the thesis.
\end{proof}
We conclude this section by proving that if two sequences of positive random variables are asymptotic, and we have some control on the second moment of at least one of them, then their mean values will also be asymptotic. We explain this in details in the following lemma, in which the particular case of $|AA|$ and $(|A|^2+|A|)/2$ has been analysed.
\begin{lem}
\label{propasympmean}
As $N\alpha\longrightarrow +\infty$, if $|AA|\ \sim (|A|^2+|A|)/2$, with probability $1-o(1)$, we have 
$$\mathbb{E}[|AA|]\sim \mathbb{E}[(|A|^2+|A|)/2],\ \ \ \text{as $N\longrightarrow +\infty$}.$$
\end{lem}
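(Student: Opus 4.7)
The plan is to exploit the deterministic inequality $0 \le |AA| \le (|A|^2+|A|)/2$ together with a second-moment bound on $(|A|^2+|A|)/2$ to turn the in-probability asymptotic into an $L^1$ asymptotic. I write $X_1 := |AA|$ and $X_2 := (|A|^2+|A|)/2$, noting that every element of $AA$ can be represented as a product $ab$ with $a\le b$ in $A$, which already forces $X_1\le X_2$ pointwise, hence $\mathbb{E}[X_2]\ge \mathbb{E}[X_1]$; so only the upper bound $\mathbb{E}[X_2]\le (1+o(1))\mathbb{E}[X_1]$ needs work.

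Fix $\delta>0$ and partition the sample space into $E_\delta = \{X_2 - X_1 \le \delta X_2\}$ and its complement $E_\delta^c = \{X_2 - X_1 > \delta X_2\}$. On $E_\delta$ we have the pointwise bound $X_2 - X_1 \le \delta X_2$, and on $E_\delta^c$ we use $X_2 - X_1 \le X_2$ (because $X_1\ge 0$). Therefore
\begin{equation*}
0\le \mathbb{E}[X_2] - \mathbb{E}[X_1] = \mathbb{E}[X_2 - X_1] \le \delta\,\mathbb{E}[X_2] + \mathbb{E}\bigl[X_2\,\mathbf{1}_{E_\delta^c}\bigr].
\end{equation*}
By Cauchy--Schwarz the second term is at most $\sqrt{\mathbb{E}[X_2^2]\,\mathbb{P}(E_\delta^c)}$, and by hypothesis $\mathbb{P}(E_\delta^c) = o_\delta(1)$ as $N\to+\infty$.

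The remaining step is to check that $\mathbb{E}[X_2^2] \ll \mathbb{E}[X_2]^2$ uniformly as $N\alpha\to+\infty$, so that the Cauchy--Schwarz term becomes $o_\delta(\mathbb{E}[X_2])$. Since $|A|(|A|+1)/2 \le |A|^2$ whenever $|A|\ge 1$ (and both sides vanish otherwise), we have $X_2^2\le |A|^4$ pointwise, and the moment formulas recalled in the introduction give
\begin{equation*}
\mathbb{E}[X_2^2] \le \mathbb{E}[|A|^4] = (N\alpha)^4 + O\bigl((N\alpha)^3\bigr),
\qquad
\mathbb{E}[X_2] = \frac{(N\alpha)^2}{2} + O(N\alpha),
\end{equation*}
so $\mathbb{E}[X_2^2]/\mathbb{E}[X_2]^2 \to 4$ as $N\alpha\to+\infty$; in particular it is bounded.

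Plugging this into the chain of inequalities yields
\begin{equation*}
0\le \frac{\mathbb{E}[X_2] - \mathbb{E}[X_1]}{\mathbb{E}[X_2]} \le \delta + O\!\left(\sqrt{\mathbb{P}(E_\delta^c)}\right) = \delta + o_\delta(1),
\end{equation*}
and letting $N\to+\infty$ first and then $\delta\to 0^+$ gives $\mathbb{E}[X_1] \sim \mathbb{E}[X_2]$, as required. The only delicate point is the Cauchy--Schwarz step: one must tie the probabilistic asymptotic $X_1\sim X_2$ (which only controls a fixed relative error with high probability) to an integrable control, and this is supplied precisely by the fourth-moment bound on $|A|$ listed at the start of the introduction.
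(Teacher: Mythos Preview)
Your proof is correct and follows essentially the same route as the paper's: split the expectation of the difference according to whether the relative error exceeds a threshold, apply Cauchy--Schwarz on the bad event, and control the resulting second-moment factor via the known moment formulas for $|A|$. The only cosmetic differences are that the paper labels the two random variables oppositely and applies Cauchy--Schwarz to $(X_1-X_2)\mathbf{1}_{E_\delta^c}$ rather than to $X_2\mathbf{1}_{E_\delta^c}$ (you streamline this via the pointwise bound $X_2-X_1\le X_2$), and the paper records the slightly sharper $\mathbb{E}[X_2^2]\sim \mathbb{E}[X_2]^2$ whereas you only need boundedness of the ratio.
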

\begin{proof} 
To simplify notations let us put $X_1=(|A|^2+|A|)/2$ and $X_2=|AA|$.
We certainly have
$$\mathbb{E}[X_1]=\mathbb{E}[X_1-X_2]+\mathbb{E}[X_2]$$
where the first mean value on the right hand side above is for any $\eps>0$
\begin{align*}
&=\mathbb{E}[(X_1-X_2)\textbf{1}_{(X_1-X_2)\geq\ \eps X_2}]+\mathbb{E}[(X_1-X_2)\textbf{1}_{(X_1-X_2)\leq\ \eps X_2}]\\
&\leq \mathbb{E}[(X_1-X_2)\textbf{1}_{(X_1-X_2)\geq\ \eps X_2}]+\eps \mathbb{E}[X_2]\\
&\leq \sqrt{\mathbb{E}[(X_1-X_2)^2]\mathbb{P}(X_1-X_2\geq \eps X_2)}+\eps \mathbb{E}[X_2]&\text{(by Cauchy--Schwarz)}\\
&\leq o_{\eps}\bigg(\sqrt{\mathbb{E}[X_1^2+X_2^2]}\bigg)+\eps \mathbb{E}[X_2]&\text{(by hypothesis)}\\
&\leq o_{\eps}\bigg(\sqrt{\mathbb{E}[X_1^2]}\bigg)+\eps \mathbb{E}[X_2]&\text{(since $X_2\leq X_1$)}.\\
\end{align*}
Using \eqref{importcons} and the asymptotics on the moments of $|A|$, it is immediate to show that
$\mathbb{E}[X_1^2]\sim \mathbb{E}[X_1]^2$. Putting the above estimates together, we deduce that
$$\mathbb{E}[X_1](1-o_\eps(1))\leq \mathbb{E}[X_2](1+\eps).$$
From this we can reach the conclusion. Indeed, choose $N_0=N_0(\eps)$ such that $o_{\eps}(1)\leq \eps$, for any $N\geq N_0$. Then 
$$\bigg|\frac{\mathbb{E}[X_1]}{\mathbb{E}[X_2]}-1\bigg|\leq 2\eps+O(\eps^2),$$
for any $N\geq N_0$, from which the thesis.
\end{proof}
\section{Proof of the introductory results}
In this section we give a proof of the first two propositions stated in the introduction.
\begin{proof}[Proof of Proposition \ref{equalitycase}]
Every element in $AA$ is by definition a product $ab$, with $a,b\in A$. The number of such products, without accounting for the multiplicity coming from the symmetry $ab=ba$, is at most $(|A|^2+|A|)/2.$ We will now show that the probability of having $|AA|\ =(|A|^2+|A|)/2$ tends to $1$. Equivalently, if we let 
$$\Sigma(A):=\frac{|A|^2+|A|}{2}-|AA|$$
we will show that 
$$\mathbb{P}(\Sigma(A) \geq 1)=o(1).$$
To this aim we introduce the following notation. We indicate with 
$$\tau_N(n):=\#\{(j,k)\in [N]\times [N]: n=jk\}$$ 
the number of representations of a positive integer $n$ as product $n = jk$, with $1\leq j, k\leq  N$. Clearly, we have 
\begin{equation}
\label{averagetau0}
\sum_{1\leq n\leq N^2}\tau_N(n)= N^2.
\end{equation}
Hence, we can infer that
\begin{align*}
\mathbb{P}(\Sigma(A)\geq 1)=\mathbb{P}(\exists (a,b,c,d)\in A^4: ab=cd\ \text{and}\ a\neq c,d)
&\leq\alpha^4\sum_{ab\in [N][N]}\sum_{\substack{d|ab\\ d\neq a,b\\ d\leq N,ab/d\leq N}}1\\
&\leq \alpha^4\sum_{ab\in [N][N]}\tau_N(ab)\\
&\leq\alpha^4\sum_{n\leq N^2}\tau_N(n)=\alpha^4N^2,
\end{align*}
by the union bound and \eqref{averagetau0}. Since by hypothesis $\alpha=o(1/\sqrt{N})$, we get $\mathbb{P}(\Sigma(A)\geq 1)=o(1)$, hence the thesis.
\end{proof}
Proposition \ref{productsetsthm} is the content of \cite[Theorem 1.2]{CRR} (for a generalization thereof to iterated product sets of random sets see instead Sanna \cite{SA}), but here we are going to present a new alternative proof.
\begin{proof}[Proof of Proposition \ref{productsetsthm}]
By an application of Cauchy--Schwarz's inequality, we have
\begin{equation}
\label{CauchySchwarz}
\bigg(\frac{|A|^2+|A|}{2}\bigg)^2=\bigg(\sum_{x\in AA}r_{AA}(x)\bigg)^2\leq |AA|\bigg(\sum_{x\in AA}r_{AA}(x)^2\bigg)=|AA|E(A),
\end{equation}
where $r_{AA}(x)$ is the number of representations of $x$ as a product of two elements in $A$, without accounting for possible symmetries. For an appearance of the use of inequality \eqref{CauchySchwarz} to produce a lower bound for the size of product sets see the Tao and Vu's textbook \cite[Lemma 2.30]{TV}.

Since $E(A)=(|A|^2+|A|)/2+R(A)$, where $R(A)$ is the number of non-trivial solutions to $ab=cd$ in $A$, from \eqref{CauchySchwarz} we get
\begin{equation}
\label{consequenceCS}
\frac{((|A|^2+|A|)/2)^2}{(|A|^2+|A|)/2+R(A)}\leq |AA|.
\end{equation}
Moreover, we have 
\begin{align}
\label{meanvalueR(A)}
\mathbb{E}[R(A)]&=\sum_{\substack{1\leq a,b,c,d\leq N\\ ab=cd\\a\neq b,c,d}}\mathbb{P}(a,b,c,d\in A)+\sum_{\substack{1\leq a,c,d\leq N\\ a^2=cd\\ a\neq c,d}}\mathbb{P}(a,c,d\in A)\\
&\leq \sum_{\substack{1\leq a,b,c,d\leq N\\ ab=cd}}\alpha^4+\sum_{1\leq a\leq N}\sum_{\substack{1\leq d\leq N\\ d|a^2}}\alpha^3\nonumber\\
&\leq \alpha^4E([N])+\alpha^3\sum_{1\leq a\leq N}\tau(a^2),\nonumber
\end{align}
where $\tau(n)$ is the divisor function, which counts the number of positive divisors of a positive integer $n$. It has been proven in \cite[Lemma 2.1]{CRR} that $E([N])\ll N^2\log N$. Moreover, we have \footnote{The correct order of magnitude for the partial sum of $\tau(n^2)$ over the positive integers $n$ up to $x$ is $x(\log x)^2$, but we do not need this degree of precision here.}
$$\sum_{n\leq x}\tau(n^2)\ll x(\log x)^3\ \ \ (x\geq 2),$$
which can be easily derived from Lemma \ref{Rankinestimate0}. We deduce that \eqref{meanvalueR(A)} is
$$\ll \alpha^4N^2\log N+\alpha^3N(\log N)^3.$$
We conclude that values of $\alpha=o((\log N)^{-1/2})$ makes the above of size $o(\alpha^2N^2)$. By Markov's inequality we then have for any $\eps>0$
$$\mathbb{P}(R(A)>\eps \alpha^2N^2)\leq \frac{\mathbb{E}[R(A)]}{\eps\alpha^2N^2}=o_\eps(1).$$
Combining this with \eqref{importcons} and \eqref{consequenceCS}, we deduce that
$$\frac{|A|^2+|A|}{2}(1+O(\eps))\leq |AA|\leq \frac{|A|^2+|A|}{2}$$
with probability $1-o_{\eps}(1)$. By the arbitrariness of $\eps>0$, we get the result.
\end{proof}
\section{Proof sketch of Theorem \ref{mainthm}}
\subsection{The basic set up}
Let us define
$$X_A:=\frac{|A|^2+|A|}{2}-|AA|\geq 0.$$
We would like to show that for any $\delta>0$, there exists $N_0=N_0(\delta)>0$ such that for any $N\geq N_0$ we have
$$\mathbb{P}(X_A\geq \delta (|A|^2+|A|)/2)=o_{\delta}(1).$$
Thanks to \eqref{importcons} it suffices to show that
$$\mathbb{P}(X_A\geq \delta (N\alpha)^2)=o_{\delta}(1)$$
and thus that 
$$\mathbb{E}[X_A]=o(\alpha^2N^2),$$
by means of Markov's inequality. In order to achieve this, we will express the mean of $X_A$ in terms of a certain average of the function $\tau_N(n)$. More precisely, by \eqref{importcons} and \eqref{averagetau0}, and since from the proof of \cite[Proposition 3.2]{CRR} we know that
\begin{equation}
\label{meanproductset}
\mathbb{E}[|AA|]=\sum_{1\leq n\leq N^2}(1-(1-\alpha^2)^{\tau_N(n)/2})+O(N\alpha),
\end{equation}
we deduce that 
\begin{align}
\label{asympmeanconj}
\mathbb{E}[X_A]\approx \sum_{1\leq n\leq N^2}\bigg(\frac{\alpha^2\tau_N(n)}{2}-1+(1-\alpha^2)^{\tau_N(n)/2}\bigg).
\end{align}
The term inside the parenthesis is the difference between the binomial $(1-\alpha^2)^{\tau_N(n)/2}$ and its first order Taylor expansion. We then split the sum into two parts: the first one being on those integers $\mathcal{S}_1\subset [N^2]$ where it is possible to Taylor expand the above binomial a little further, the second one being on the rest $\mathcal{S}_2$. 

Since then summand in \eqref{asympmeanconj} is always dominated by $\alpha^2\tau_N(n)$, we can simply bound the contribution from $\mathcal{S}_2$ with 
\begin{equation}
\label{contributionfromS2}
\ll \alpha^2\sum_{n\in\mathcal{S}_2}\tau_N(n).
\end{equation}
On the other hand, by Taylor expanding the binomial, the contribution from $\mathcal{S}_1$ is 
\begin{align}
\label{contributionfromS1}
\approx \alpha^4 \sum_{n\in\mathcal{S}_1}\tau_N(n)^2.
\end{align}
We are left with suitably defining the sets $\mathcal{S}_1, \mathcal{S}_2$ in order to make the above two sums small. It is clear that we need first to understand the distribution of the function $\tau_N$.
\subsection{Heuristic behaviour of $\tau_N$}
We claim that roughly speaking we may think of $\tau_N(n)$ as
$$\tau_N(n)\approx 2\tau(n)\bigg(1-\frac{\log n}{2\log N}\bigg)\ \ \ (\text{for most}\ n\leq N^2),$$
at least when we consider $\tau_N$ on average over a ``large'' set of integers.

Indeed, if we assume that for most positive integers $n\leq N^2$ the set $\{\log d/\log N: d|n\}$ is roughly uniformly distributed over the interval $[0,1]$, we have
\begin{align*}
\tau_N(n)=\#\{d|n: n/N\leq d\leq N\}&\approx\sum_{k=\lfloor\frac{\log(n/N)}{\log 2}\rfloor}^{\lfloor\frac{\log N}{\log 2}-1\rfloor}\sum_{\substack{d|n\\ 2^k<d\leq 2^{k+1}}}1\\
&\approx \tau(n)\sum_{k=\lfloor\frac{\log(n/N)}{\log 2}\rfloor}^{\lfloor\frac{\log N}{\log 2}-1\rfloor}\frac{\log 2}{\log N}\\
&\approx \frac{\tau(n)\log 2}{\log N}\bigg(\frac{\log N-\log(n/N)}{\log 2}\bigg)\\
&=\frac{\tau(n)}{\log N}(2\log N-\log n)\\
&=2\tau(n)\bigg(1-\frac{\log n}{2\log N}\bigg).
\end{align*}
We note that the mass of the average of $\tau(n)$ over the integers $n\leq N^2$ is mainly concentrated around those integers close, but not too much, to $N^2$. Indeed, for the $k$-th moment of $\tau(n)$ we have
\begin{equation}
\label{averagemomentsdivfct}
\sum_{n\leq N^2}\tau(n)^k\sim c_k N^2(\log N)^{2^k-1},\ \ \ \text{as $N\longrightarrow +\infty$},
\end{equation}
for a certain $c_k>0$ (see e.g. Luca and T\'{o}th's paper \cite{LT}).
We deduce that, for any $B\geq 1$, the part of the sum over $n\leq N^2/B$, say, contributes 
$$\ll \frac{N^2(\log N)^{2^k-1}}{B},$$
thus making a negligible contribution to \eqref{averagemomentsdivfct}, when $B>0$ is large enough. 

On the other hand, for the part of the sum over $n>N^{2}(1-1/C)$, we again get a negligible contribution to \eqref{averagemomentsdivfct}, when $C>0$ is large enough, by Shiu's theorem \cite[Theorem 1]{SH}.

In conclusion, the main contribution to the sum in \eqref{averagemomentsdivfct} comes from those integers $n\asymp N^2$. Therefore, we can recast our heuristic as
\begin{equation}
\label{mainheuristic}
\tau_N(n)\approx \frac{\tau(n)}{\log N}.
\end{equation}
\subsection{Heuristics for the mean of $X_A$: $\mathcal{S}_2$--part}
Using \eqref{mainheuristic} we may rewrite \eqref{contributionfromS2} as
\begin{align*}
\approx\frac{\alpha^2}{\log N}\sum_{n\in \mathcal{S}_2}\tau(n).
\end{align*}
It is well-known that the average of $\tau(n)$ is small (compared to the whole average given in \eqref{averagemomentsdivfct} for $k=1$) on those integers $n\leq N^2$ with a number of distinct prime factors $\omega(n)$ far from $2\log\log N$. More precisely, we can prove the following lemma.
\begin{lem}
\label{lemtailtau}
For any $0<\eps<1$, we have
$$\sum_{\substack{1\leq n\leq N^2\\ |\omega(n)-2\log\log N|>\eps \log\log N}}\tau(n)\ll N^2(\log N)^{1-2\eta},$$
with
$$\eta:=\bigg(1+\frac{\eps}{2}\bigg)\log\bigg(1+\frac{\eps}{2}\bigg)-\frac{\eps}{2}$$
and a uniformly bounded implicit constant.
\end{lem}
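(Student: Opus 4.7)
The plan is to prove Lemma \ref{lemtailtau} via the Rankin (exponential-moment) trick, splitting the constraint $|\omega(n) - 2\log\log N| > \eps \log\log N$ into its upper part ($\omega(n) > (2+\eps)\log\log N$) and lower part ($\omega(n) < (2-\eps)\log\log N$) and treating each symmetrically.

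The key auxiliary estimate I need first is that, uniformly for $y$ in any fixed compact subinterval of $(0, \infty)$,
$$\sum_{n \leq x} \tau(n) y^{\omega(n)} \ll x (\log x)^{2y - 1}.$$
I would derive this by applying Lemma \ref{Rankinestimate0} to the non-negative multiplicative function $f(n) = \tau(n) y^{\omega(n)}$, which satisfies $f(p^k) = (k+1) y$. Hypothesis \eqref{sumoverprime} holds with $C = O(y)$ by Chebyshev's bound, and \eqref{sumoverprimepowers} holds with $C = O(y)$ since $\sum_{k \geq 2} k(k+1)/p^k \ll 1/p^2$ and $\sum_p \log p/p^2$ converges. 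The Euler product $\prod_{p \leq x}(1 + 2y/p + O(y/p^2))$ is $\ll (\log x)^{2y}$ by Mertens' theorem, and combining this with the conclusion of Lemma \ref{Rankinestimate0} yields the claimed estimate.

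I would then apply Rankin's inequality $\mathbf{1}_{\omega(n) > T} \leq y^{\omega(n) - T}$ to the upper tail with $y > 1$ and $T = (2+\eps)\log\log N$: together with the identity $y^{-T} = (\log N)^{-(2+\eps)\log y}$, this gives
$$\sum_{\substack{n \leq N^2 \\ \omega(n) > T}} \tau(n) \ll N^2 (\log N)^{2y - 1 - (2+\eps) \log y}.$$
The crux is then the optimisation: differentiating the exponent shows the minimum over $y > 1$ is attained at $y^\ast = 1 + \eps/2$, and direct substitution gives exponent $1 + \eps - 2(1+\eps/2)\log(1+\eps/2) = 1 - 2\eta$, matching the statement. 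The lower tail is handled identically with $0 < y < 1$ and threshold $(2-\eps)\log\log N$, optimal $y = 1 - \eps/2$, producing exponent $1 - 2\tilde\eta$ where $\tilde\eta = (1-\eps/2)\log(1-\eps/2) + \eps/2$; since $h(x) := (1+x)\log(1+x) - x$ has $h''(x) = 1/(1+x)$ decreasing on $(-1, \infty)$, a short Taylor comparison shows $h(-x) \geq h(x)$ for $x \in (0, 1)$, whence $\tilde\eta \geq \eta$ and the lower-tail bound is absorbed into the upper one. The only mild subtlety is to keep the implicit constants uniform in $\eps \in (0, 1)$, but this is automatic because $y^\ast = 1 \pm \eps/2 \in [1/2, 3/2]$, a fixed compact set on which all constants from Lemma \ref{Rankinestimate0} and Mertens' theorem are bounded.
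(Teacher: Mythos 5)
Your proof is correct and follows essentially the same route as the paper: Rankin's exponential-moment trick on the weighted count $\sum\tau(n)y^{\omega(n)}$, the mean-value bound of Lemma \ref{Rankinestimate0} yielding $\ll N^2(\log N)^{2y-1}$, and optimisation at $y = 1\pm\eps/2$. The one place you go beyond the paper is the lower tail, which the paper dispatches with "similarly deduced": your explicit computation of $\tilde\eta$ and the verification (via monotonicity of $h(-x)-h(x)$, whose derivative $-\log(1-x^2)$ is positive on $(0,1)$) that $\tilde\eta \geq \eta$ is a genuine and correct detail, and is exactly what is needed to absorb the lower-tail bound into the stated $N^2(\log N)^{1-2\eta}$.
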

\begin{proof}
We focus on estimating only the part of the sum corresponding to integers $n\leq N^2$ for which 
$$\omega(n)>(2+\eps)\log\log N,$$
since the estimate for the complementary part can be then similarly deduced. The sum we would like to estimate can be interpreted as the mean value of the indicator function on the above condition weighted with $\tau(n)$. In analogy to the exponential moment method in probability theory, we let $y>1$ be a parameter to determine later on and upper bound the aforementioned sum with:
\begin{align*}
y^{-(2+\eps)\log\log N}\sum_{1\leq n\leq N^2}\tau(n)y^{\omega(n)}&\ll N^2 (\log N)^{2y-1}y^{-(2+\eps)\log\log N}\\
&=N^2 (\log N)^{2y-1-(2+\eps)\log y},
\end{align*}
by Lemma \ref{Rankinestimate0}, with a uniformly bounded implicit constant. Indeed, conditions \eqref{sumoverprime} and \eqref{sumoverprimepowers} are satisfied by
\begin{align*}
&\sum_{p\leq x}\tau(p)y^{\omega(p)}\log p=2y\sum_{p\leq x}\log p\\
&\sum_{\substack{p^k:\\ k\geq 2}}\frac{\tau(p^k)y^{\omega(p^k)}k\log p}{p^k}=y\sum_{\substack{p^k:\\ k\geq 2}}\frac{k(k+1)\log p}{p^k}
\end{align*}
and by Chebyshev's estimates \cite[ch. I, Corollary 2.12]{T}. Moreover, for any $x\geq 2$ we have
$$\prod_{p\leq x}\bigg(\sum_{k\geq 0}\frac{\tau(p^k)y^{\omega(p^k)}}{p^k}\bigg)=\prod_{p\leq x}\bigg(1+y\sum_{k\geq 1}\frac{k+1}{p^k}\bigg)\ll \prod_{p\leq x}\bigg(1+\frac{2y}{p}\bigg)\ll (\log x)^{2y},$$
by Mertens' formula \cite[ch. I, Theorem 1.12]{T}, with a uniformly bounded implicit constant independent of $y$.

We can now optimize in $y$. Letting $y:=1+\eps/2$ we reach the thesis, since
$$2y-1-(2+\eps)\log y=1+\eps-(2+\eps)\log\bigg(1+\frac{\eps}{2}\bigg)=1-2\eta.$$
Note that $\eta>0$, if $\eps$ small enough. Thus, the upper bound we found is non-trivial.
\end{proof}
The parameter $\eps$ in Lemma \ref{lemtailtau} has not been specified yet. On the other hand, the bound there strongly depends on it. We then need a careful choice. By working in analogy to the Turan--Kubilius' inequality (see e.g. \cite[ch. III, Theorem 3.1]{T}), we define the set $\mathcal{S}_1$ as:
$$\mathcal{S}_1:=\{n\leq N^2:|\omega(n)-2\log\log N|\leq \lambda(N)\sqrt{\log\log N}\}.$$ 
Here $\lambda(N)$ is any function with $\lambda(N)\longrightarrow +\infty$, as $N\longrightarrow +\infty$, and $\lambda(N)/\sqrt{\log\log N}\longrightarrow 0$.

We can now make the following consideration: since on a positive proportion of integers $n\leq N^2$ we may identify $\tau(n)$ with $2^{\omega(n)}$ and since on $\mathcal{S}_1$ we have $\omega(n)$ equal to $2\log\log N$ plus a smaller error term, in view of our previous heuristic \eqref{mainheuristic}, we can expect
$$\log(\tau_N(n))\approx (\log 4-1)\log\log N\ \ \ (\text{for most}\ n\leq N^2),$$
which can be considered as the ``normal'' order of $\log(\tau_N(n))$ (for a rigorous definition of the normal order of an arithmetical function, see e.g. \cite[ch. III, eq. (3.1)]{T}).

Combining this with the result of Lemma \ref{lemtailtau}, we expect \eqref{contributionfromS2} to be bounded by
\begin{equation}
\label{tailgauss}
\ll \alpha^2 N^2\exp\bigg(-\frac{\lambda(N)^2}{4}(1+o(1))\bigg),
\end{equation}
by choosing the parameter $\eps$ in Lemma \ref{lemtailtau} as $\eps:=\lambda(N)/\sqrt{\log\log N}$ so that the parameter $\eta$ in Lemma \ref{lemtailtau} equals
\begin{align*}
\eta&=\bigg(1+\frac{\lambda(N)}{2\sqrt{\log\log N}}\bigg)\log \bigg(1+\frac{\lambda(N)}{2\sqrt{\log\log N}}\bigg)-\frac{\lambda(N)}{2\sqrt{\log\log N}}\\
&=\bigg(1+\frac{\lambda(N)}{2\sqrt{\log\log N}}\bigg)\bigg(\frac{\lambda(N)}{2\sqrt{\log\log N}}-\frac{\lambda(N)^2}{8\log\log N}+O\bigg(\frac{\lambda(N)^3}{(\log\log N)^{3/2}}\bigg)\bigg)-\frac{\lambda(N)}{2\sqrt{\log\log N}}\\
&=-\frac{\lambda(N)^2}{8\log\log N}+\frac{\lambda(N)^2}{4\log\log N}+O\bigg(\frac{\lambda(N)^3}{(\log\log N)^{3/2}}\bigg)\\
&=\frac{\lambda(N)^2}{8\log\log N}(1+o(1)).\\
\end{align*}
Since $\lambda(N)\longrightarrow +\infty$, we readily see that \eqref{tailgauss} is $o(\alpha^2 N^2)$.
\subsection{The Erd\H{o}s--Kac's theorem}
We should stop a moment to understand why \eqref{tailgauss} is essentially best possible.

The origin of this stems from the distribution of the function $\omega(n)$ over the integers $n\leq N^2$. We define the probability space $([N],\mathcal{P}([N]), \mathbb{P}_N)$, where $\mathcal{P}([N])$ is the power set of $[N]$ and $\mathbb{P}_N$ denotes the discrete uniform measure on $[N]$. A classical consequence of the Turan--Kubilius' inequality (see e.g. \cite[ch. III, Theorem 3.4]{T}) is the following result.
\begin{prop}
\label{Turankubilius}
Given any function $t(N)\geq 1$, we have
$$\mathbb{P}_N(|\omega(n)-\log\log N|>t(N)\sqrt{\log\log N})\ll \frac{1}{t(N)^2}.$$
In particular, if $t(N)\longrightarrow+\infty$, as $N\longrightarrow +\infty$, then ``almost all'' numbers $n\leq N^2$ (in the sense of asymptotic density) satisfy:
$$|\omega(n)-\log\log N|\leq t(N)\sqrt{\log\log N}.$$
\end{prop}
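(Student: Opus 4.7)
The plan is to deduce the inequality by reducing it to a second moment estimate on the centered additive function $\omega(n)-\log\log N$ and then applying Chebyshev. The second moment bound is the classical Turan--Kubilius estimate, which the proposition explicitly relies on.

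The first step would be to establish
$$\frac{1}{N}\sum_{n\leq N}\big(\omega(n)-\log\log N\big)^2 \ll \log\log N.$$
To prove this I would write $\omega(n)=\sum_{p\leq N}\mathbf{1}_{p\mid n}$, so that by Mertens' formula
$$\mathbb{E}_N[\omega(n)]=\frac{1}{N}\sum_{p\leq N}\lfloor N/p\rfloor=\sum_{p\leq N}\frac{1}{p}+O(1)=\log\log N+O(1).$$
Next I would expand $\omega(n)^2=\sum_{p,q\leq N}\mathbf{1}_{p\mid n}\mathbf{1}_{q\mid n}$, splitting into diagonal ($p=q$) and off-diagonal ($p\neq q$) parts. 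The diagonal contributes exactly $\mathbb{E}_N[\omega(n)]$, while the off-diagonal, after replacing $\lfloor N/(pq)\rfloor$ by $N/(pq)+O(1)$ and controlling the error via the number of pairs with $pq\leq N$, produces $\big(\sum_{p\leq N}1/p\big)^2+O(\log\log N)=(\log\log N)^2+O(\log\log N)$. Subtracting the square of the mean yields the stated centered second moment bound; alternatively, one may quote \cite[ch. III, Theorem 3.1]{T} directly.

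Once this is in hand, the proposition follows from Markov's inequality applied to the non-negative random variable $(\omega(n)-\log\log N)^2$:
$$\mathbb{P}_N\big(|\omega(n)-\log\log N|>t(N)\sqrt{\log\log N}\big)\leq\frac{\mathbb{E}_N[(\omega(n)-\log\log N)^2]}{t(N)^2\log\log N}\ll\frac{1}{t(N)^2}.$$
The ``in particular'' assertion is then immediate since $t(N)\to+\infty$ forces the right-hand side to $0$. There is no serious obstacle: the argument is entirely standard, and the only mild point is the $O(1)$ discrepancy between $\mathbb{E}_N[\omega(n)]$ and the centering value $\log\log N$, which is absorbed harmlessly into the variance estimate because $\log\log N\gg 1$.
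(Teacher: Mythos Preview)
Your proposal is correct and matches the paper's treatment: the paper does not give a proof at all but simply records the statement as ``a classical consequence of the Tur\'an--Kubilius inequality'' with a reference to \cite[ch.~III, Theorem~3.4]{T}. Your argument---second moment bound for $\omega$ followed by Chebyshev---is precisely the standard derivation behind that citation, so there is nothing to add.
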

Proposition \ref{Turankubilius} gives the feeling that we really need to work here with unbounded functions in order to get the infinitesimal order contribution necessary to show that \eqref{asympmeanconj} is $o(\alpha^2 N^2)$. However, in the proof of our main result we will need a deeper understanding of the distribution of $\omega(n)$ to just work with arbitrarily large positive constants instead of unbounded functions $\lambda(N)$. In fact, an application of the moments method leads to the following well celebrated consequence of the Erd\H{o}s--Kac's theorem (see e.g. \cite[ch. III, Theorem 4.15]{T}).
\begin{prop}
\label{Erdoskac}
Under the probability measure $\mathbb{P}_N$, we have
$$\frac{\omega(n)-\log\log N}{\sqrt{\log\log N}}\longrightarrow N(0,1)\ \ \ (\text{as $N\longrightarrow +\infty$}),$$
where $N(0,1)$ indicates a random variable of standard normal distribution. 
\end{prop}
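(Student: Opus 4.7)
The plan is to apply the method of moments, in the classical Erd\H{o}s--Kac spirit. Writing $\omega(n) = \sum_{p\mid n} 1$, for each prime $p$ I would introduce the indicator random variable $X_p(n) := \mathbf{1}_{p\mid n}$ under the uniform measure $\mathbb{P}_N$. A direct count gives $\mathbb{E}_N[X_p] = \lfloor N/p\rfloor/N = 1/p + O(1/N)$, and more generally $\mathbb{E}_N[X_{p_1}\cdots X_{p_r}] = 1/(p_1\cdots p_r) + O(1/N)$ whenever $p_1 < \cdots < p_r$ and $p_1\cdots p_r \le N$. Mertens' formula then yields $\sum_{p\le N} 1/p = \log\log N + M + o(1)$, which already gives $\mathbb{E}_N[\omega(n)] = \log\log N + O(1)$ and, via the near-independence above, $\mathrm{Var}_N(\omega(n)) = \log\log N + O(1)$.

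First I would truncate: replace $\omega(n)$ by $\omega_y(n) := \sum_{p\le y}\mathbf{1}_{p\mid n}$ for a parameter $y = y(N)$, say $y := N^{1/\log\log N}$. A Chebyshev bound shows that $\omega(n) - \omega_y(n)$ has mean and variance $O(\log\log\log N)$ and hence contributes $o(\sqrt{\log\log N})$ in probability, so it can be ignored. The point of truncating is that for bounded $k$, every product $p_1\cdots p_k$ arising in the $k$-th moment calculation satisfies $p_1\cdots p_k \le y^k = N^{o(1)}$, making the $O(1/N)$ error in each joint expectation negligible compared to the main term $1/(p_1\cdots p_k)$, uniformly across the tuples.

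The core calculation is then to compute, for each fixed $k \ge 1$, the centred moment
$$M_k(N) := \mathbb{E}_N\Bigl[\bigl(\omega_y(n) - \log\log N\bigr)^k\Bigr].$$
I would expand the power via the multinomial theorem and sort the resulting sum over $k$-tuples of primes according to the set partition of $\{1,\dots,k\}$ determined by coincidences among the indices. The leading contribution comes from partitions into $k/2$ pairs of equal primes: each pair contributes $\sum_{p\le y}(1/p - 1/p^2) = \log\log N + O(1)$, and the number of such pairings is the double factorial $(k-1)!!$. All other partitions produce either convergent prime sums (blocks of size $\ge 3$ give factors $\sum_p 1/p^3$, etc.) or contributions that cancel against the $-\log\log N$ centering (any singleton block). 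For even $k$ this gives $M_k(N) = (k-1)!!\,(\log\log N)^{k/2}(1+o(1))$, i.e.\ the $k$-th Gaussian moment times $(\log\log N)^{k/2}$; for odd $k$ it gives $M_k(N) = o((\log\log N)^{k/2})$. Since the standard normal is determined by its moments, the Fr\'echet--Shohat theorem delivers convergence in distribution.

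The hard part is the combinatorial bookkeeping in this expansion: one must verify that \emph{only} the complete-pairing patterns survive at the top order $(\log\log N)^{k/2}$, carefully pairing the contribution of every singleton block against the centering constant using the Mertens asymptotic, and showing that every partition containing a block of size $\ge 3$ produces an absolutely convergent prime sum and hence a factor bounded by a power of $\log\log N$ strictly smaller than $k/2$. The choice $y = N^{o(1)}$ is what keeps the $O(1/N)$ errors from the finite-$N$ truncation uniformly harmless across the $y^k$ terms of the multinomial expansion.
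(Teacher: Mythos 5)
The paper does not actually prove this proposition: it is quoted from Tenenbaum [ch.~III, Theorem~4.15] with a one-line remark that it follows from the method of moments, which is exactly the strategy you outline. Your sketch is the standard moment-method proof of Erd\H{o}s--Kac (truncate at $y=N^{o(1)}$, expand centred moments over set partitions of $\{1,\dots,k\}$, identify perfect pairings as the leading term, invoke Fr\'echet--Shohat), and in outline it is correct, including the choice $y=N^{1/\log\log N}$ and the Chebyshev argument that $\omega(n)-\omega_y(n)=o(\sqrt{\log\log N})$ in probability.

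There is, however, one step in your combinatorial bookkeeping that is wrong as stated and would trip you up if you tried to write it out. A block of size $m\ge 3$ on a prime $p$ does \emph{not} produce an absolutely convergent sum such as $\sum_p 1/p^3$: since $X_p=\mathbf{1}_{p\mid n}$ is an indicator, one has $\mathbb{E}_N\bigl[(X_p-1/p)^m\bigr]\asymp 1/p$ for every fixed $m\ge 2$, so \emph{every} block of size at least two contributes a factor of order $\sum_{p\le y}1/p\asymp\log\log y$, not a bounded one. The reason the perfect pairings dominate is therefore purely combinatorial, not analytic: once the singleton blocks have been eliminated (those genuinely do die against the centering, up to $O(1/N)$ errors tamed by the truncation), a partition of $\{1,\dots,k\}$ with no singletons has at most $k/2$ blocks, with equality precisely for perfect matchings. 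Hence any partition containing a block of size $\ge 3$ has at most $k/2-1$ blocks and contributes $O\bigl((\log\log y)^{k/2-1}\bigr)$, which is lower order, while the $(k-1)!!$ matchings give the leading term $(k-1)!!\,(\log\log y)^{k/2}(1+o(1))$. With that correction, and noting that replacing $\log\log y$ by $\log\log N$ only shifts the centering by $\log\log\log N=o(\sqrt{\log\log N})$, your argument is complete.
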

Therefore, for any fixed $t\geq 1$ we have
\begin{equation}
\label{consequenceErdoskac}
\mathbb{P}\bigg(\frac{|\omega(n)-\log\log N|}{\sqrt{\log\log N}}>t\bigg)\longrightarrow \frac{1}{\sqrt{2\pi}}\int_{t}^{+\infty}e^{-s^2/2}ds\leq \frac{e^{-t^2/2}}{t\sqrt{2\pi}},
\end{equation}
where the last inequality follows from the fact that for any $s\geq t>0$ we have
$$\int_{t}^{+\infty}1\cdot e^{-s^2/2}ds\leq \int_{t}^{+\infty}\frac{s}{t}e^{-s^2/2}ds=\frac{e^{-t^2/2}}{t}.$$
Since we already noticed that the sum in \eqref{contributionfromS2} can be recast in terms of an average of the indicator function $\textbf{1}_{|\omega(n)-\log\log N|> \lambda(N)\sqrt{\log\log N}}$, weighted with $\tau(n)$, \eqref{consequenceErdoskac} gives the feeling that the bound \eqref{tailgauss} is essentially best possible here.
\subsection{Heuristics for the mean of $X_A$: $\mathcal{S}_1$--part}
Thanks to \eqref{tailgauss} we can discard the contribution of $\mathcal{S}_2$ from the mean of $X_A$ \eqref{asympmeanconj} and we are left with understanding only that coming from $\mathcal{S}_1$, or equivalently with upper bounding \eqref{contributionfromS1}. Also, notice that until now we have not needed to specify the value of $\alpha$ in order to make the sum \eqref{contributionfromS2} negligible. On the other hand, the requirement on $\alpha$ will clearly emerge from the next computations, in which we are going to heuristically work out the second moment of $\tau_N$ over $\mathcal{S}_1$.

If we indicate with
$$\pi_k(N^2):=\#\{n\leq N^2: \omega(n)=k\},$$
the number of integers $n\leq N^2$ with exactly $k$ distinct prime factors, by the definition of the set $\mathcal{S}_1$ and thanks to \eqref{mainheuristic}, we can roughly upper bound \eqref{contributionfromS1} with
\begin{equation}
\label{sumconcentrationomega}
\frac{\alpha^4}{(\log N)^2}\sum_{|k-2\log\log N|\leq \lambda(N)\sqrt{\log\log N}}4^k \pi_k(N^2),
\end{equation}
again by identifying $\tau(n)$ with $2^{\omega(n)}$. 

The classic Landau's theorem, in the form given by an application of the Selberg--Delange's method \cite[ch. II, Theorem 6.4]{T}, supplies a uniform upper bound for $\pi_k(N^2)$, when $k$ is at most a constant times $\log\log N$. We report such result below.
\begin{prop}
\label{Landauthm}
Let $A>0$. Then uniformly on $N\geq 2$ and $1\leq k\leq A\log\log N$ we have
$$\pi_k(N^2)\ll_A \frac{N^2}{\log N}\frac{(\log\log N)^{k-1}}{(k-1)!}.$$
\end{prop}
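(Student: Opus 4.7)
The natural proof I would pursue follows the Selberg--Delange method. Starting from the Dirichlet series
\[
F(s, z) := \sum_{n \geq 1} \frac{z^{\omega(n)}}{n^s} = \prod_p \left( 1 + \frac{z}{p^s - 1} \right),
\]
I would factor it as $F(s, z) = \zeta(s)^z G(s, z)$, where the remainder factor
\[
G(s, z) := \prod_p \left( 1 + \frac{z}{p^s - 1} \right) \left( 1 - \frac{1}{p^s} \right)^z
\]
is analytic and uniformly bounded in the half-plane $\mathrm{Re}(s) > 1/2$ for $z$ in any fixed compact disk $|z| \leq A$. The factor $\zeta(s)^z$ absorbs the singularity at $s = 1$ as a branch point with $z$-dependent exponent.

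Next, I would apply Perron's formula to $F(s, z)$ and deform the contour to a Hankel loop around $s = 1$ (the heart of the Selberg--Delange machinery), in order to produce the uniform asymptotic
\[
\Phi(x, z) := \sum_{n \leq x} z^{\omega(n)} = \frac{G(1, z)}{\Gamma(z)} \, x (\log x)^{z - 1} \left( 1 + O_A\!\left( \frac{1}{\log x} \right) \right),
\]
valid for $|z| \leq A$. This is precisely the statement recorded as Theorem II.6.2 of Tenenbaum's treatise and constitutes the main technical obstacle: establishing the uniformity in $z$ requires both controlling the branch cut of $\zeta(s)^z$ and estimating $\zeta(s)^z$ on vertical segments just to the left of $\mathrm{Re}(s) = 1$, where classical zero-free region inputs enter.

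Finally, I would extract $\pi_k(x)$ from $\Phi(x, z)$ via Cauchy's coefficient formula
\[
\pi_k(x) = \frac{1}{2 \pi i} \oint_{|z| = r} \Phi(x, z) \, \frac{dz}{z^{k + 1}},
\]
choosing the saddle-point radius $r := k / \log\log x$, which lies in the compact disk $|z| \leq A$ whenever $k \leq A \log\log x$. A standard saddle-point analysis of the integrand around its maximum at $z = r$ on the positive real axis, combined with Stirling's approximation $k^k \sim e^k k! / \sqrt{2 \pi k}$, yields
\[
\pi_k(x) \ll_A \frac{x (\log x)^{r - 1}}{r^k \sqrt{k}} \ll_A \frac{x}{\log x} \cdot \frac{(\log\log x)^{k - 1}}{(k - 1)!}.
\]
Substituting $x = N^2$ and observing $\log\log(N^2) = \log\log N + O(1)$ and $\log(N^2) \asymp \log N$ then produces the exact bound claimed in the proposition.
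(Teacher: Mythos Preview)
Your proposal is correct and follows exactly the approach the paper invokes: the paper does not actually prove this proposition but simply cites it as ``the classic Landau's theorem, in the form given by an application of the Selberg--Delange's method \cite[ch.~II, Theorem 6.4]{T}''. Your sketch is a faithful outline of that Selberg--Delange argument (Dirichlet series $F(s,z)=\zeta(s)^z G(s,z)$, Hankel contour to get the uniform asymptotic for $\sum_{n\le x} z^{\omega(n)}$, then Cauchy/saddle-point extraction of the $k$-th coefficient with $r\asymp k/\log\log x$), and your final substitution $x=N^2$ with $\log\log(N^2)=\log\log N+O(1)$ is handled correctly since the ratio $(\log\log(N^2)/\log\log N)^{k-1}$ stays $\ll_A 1$ in the range $k\le A\log\log N$.
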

Plugging the above estimate into \eqref{sumconcentrationomega} we can upper bound this last one with
\begin{equation}
\label{sumconcentrationomega2}
\ll\frac{\alpha^4N^2}{(\log N)^3}\sum_{|k-2\log\log N|\leq \lambda(N)\sqrt{\log\log N}}\frac{(4\log\log N)^{k-1}}{(k-1)!}.
\end{equation}
We now need a sharp upper bound for the sum in \eqref{sumconcentrationomega2}. This can be deduced from Norton's bounds \cite{N}, whose special case we report next.
\begin{lem}
\label{lemexpTaylorexp}
Suppose $0\leq h<m\leq x$ and $m-h\geq \sqrt{x}$. Then 
$$\sum_{h\leq k\leq m}\frac{x^k}{k!}\asymp \min\bigg(\sqrt{x},\frac{x}{x-m}\bigg)\frac{x^m}{m!}.$$
\end{lem}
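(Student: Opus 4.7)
The plan is to set $a_k := x^k/k!$ and exploit the identity $a_{k+1}/a_k = x/(k+1)$, which shows that $(a_k)_k$ is unimodal with peak at $k \approx x$; on the range $h \leq k \leq m \leq x$ the terms are non-decreasing, so $a_m$ is the largest. I would prove the upper and lower bounds separately, splitting each according to whether $x - m \geq \sqrt{x}$ (the ``bulk'' regime, where $\min(\sqrt{x}, x/(x-m)) = x/(x-m)$) or $x - m < \sqrt{x}$ (the ``near-peak'' regime, where the minimum equals $\sqrt{x}$).

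For the upper bound, in the bulk regime the inequality $a_{k-1}/a_k = k/x \leq m/x < 1$ lets one dominate the partial sum by a geometric series:
$$\sum_{h \leq k \leq m} a_k \leq a_m \sum_{j \geq 0}(m/x)^j = \frac{a_m\, x}{x-m},$$
which is exactly the target size. In the near-peak regime the geometric bound degenerates, so I would instead use the trivial estimate $\sum_{k \geq 0} a_k = e^x$ together with Stirling's formula. A standard expansion yields
$$a_m = \frac{e^x}{\sqrt{2\pi m}}\exp\bigl(-(x-m)^2/(2x) + O((x-m)^3/x^2)\bigr),$$
and combined with $|x-m| < \sqrt{x}$ this gives $a_m \gg e^x/\sqrt{x}$, hence $\sum \leq e^x \ll \sqrt{x}\cdot a_m$.

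For the lower bound, the idea is to isolate a block of consecutive terms near $k = m$ each of size $\gg a_m$. Set $J := \lfloor c\, \min(\sqrt{x}, x/(x-m))\rfloor$ for a small absolute constant $c > 0$; since $\min(\sqrt{x}, x/(x-m)) \leq \sqrt{x}$, the hypothesis $m - h \geq \sqrt{x}$ guarantees $J \leq m - h$, so the block $\{m-J, \dots, m\}$ lies inside the summation range. After first disposing of the trivial case $m \leq x/2$ (in which $\min(\sqrt{x}, x/(x-m)) \ll 1$ and the single-term bound $\sum \geq a_m$ already suffices), one may assume $(x-m+i)/x$ is bounded away from $1$ throughout the block. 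Writing
$$\log\frac{a_{m-j}}{a_m} = \sum_{i=0}^{j-1}\log\bigl(1 - (x-m+i)/x\bigr)$$
and applying an inequality of the form $\log(1-y) \geq -Cy$ valid uniformly in the admissible $y$, one deduces $\log(a_{m-j}/a_m) \geq -C'\bigl(j(x-m)/x + j^2/x\bigr)$; the choice of $J$ forces both terms to be $O(c)$ uniformly in $0 \leq j \leq J$, so $a_{m-j} \gg a_m$ throughout the block. Summing yields
$$\sum_{h \leq k \leq m} a_k \geq \sum_{j=0}^{J} a_{m-j} \gg J \cdot a_m \asymp \min(\sqrt{x}, x/(x-m)) \cdot a_m,$$
as required.

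The main obstacle I anticipate is the upper bound in the near-peak regime, which relies on a sufficiently uniform form of Stirling's formula to identify $a_m$ with $e^x/\sqrt{x}$ up to absolute constants; the $O((x-m)^3/x^2)$ remainder above must be tracked with care precisely when $x-m$ sits near the threshold $\sqrt{x}$, so that the implicit constants do not depend on $m$. Once this is in hand, everything else is a routine blend of geometric summation and elementary estimates for $\log(1-y)$.
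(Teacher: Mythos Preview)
Your proof is correct. The paper does not actually prove this lemma; it simply quotes it as a special case of Norton's bounds \cite{N} and moves on. So there is nothing to compare at the level of argument---the paper outsources the result, whereas you supply a self-contained elementary proof.

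A couple of remarks on your write-up. First, your worry about the near-peak upper bound is unfounded: when $|x-m|<\sqrt{x}$ the cubic remainder $(x-m)^3/x^2$ is at most $x^{-1/2}$, so it is uniformly $o(1)$ and the Stirling expansion gives $a_m \asymp e^x/\sqrt{x}$ with absolute constants. Second, in the lower-bound block argument you should note that the hypothesis $m-h\geq\sqrt{x}$ (together with $h\geq 0$) forces $m\geq\sqrt{x}$, so in the regime $m>x/2$ one has $m-i\geq m-J>x/2-c\sqrt{x}\geq x/4$ for large $x$, which is what keeps $(x-m+i)/x$ uniformly bounded away from $1$ and legitimises the inequality $\log(1-y)\geq -Cy$. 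Finally, the edge case where $J=\lfloor c\min(\sqrt{x},x/(x-m))\rfloor$ vanishes is harmless: it only occurs when the minimum is $O_c(1)$, and then the single term $a_m$ already delivers the lower bound. With these points made explicit your argument is complete, and has the advantage over the paper's treatment of being entirely self-contained.
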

By applying the above lemma with 
\begin{align*}
&h:=2\log\log N-\lambda(N)\sqrt{\log\log N}\\
&m:=2\log\log N+\lambda(N)\sqrt{\log\log N}\\
&x:=4\log\log N
\end{align*}
and using Stirling's formula, we can upper bound the sum in \eqref{sumconcentrationomega2} with
\begin{align*}
&\ll\frac{(4e\log\log N)^m}{(2\log\log N+\lambda(N)\sqrt{\log\log N})^{m}\sqrt{m}}\\
&=\frac{(2e)^m}{\sqrt{m}}\bigg(1+\frac{\lambda(N)}{2\sqrt{\log\log N}}\bigg)^{-m}\\
&=\frac{(\log N)^{\log 4+2}\exp((\log 2+1)\lambda(N)\sqrt{\log\log N})}{\sqrt{m}}\exp\bigg(-m\log\bigg(1+\frac{\lambda(N)}{2\sqrt{\log\log N}}\bigg)\bigg)\\
&\ll \frac{(\log N)^{\log 4+2}\exp((\log 2+o(1))\lambda(N)\sqrt{\log\log N})}{\sqrt{\log\log N}}\\
&=(\log N)^{\log 4+2}\exp((\log 2+o(1))\lambda(N)\sqrt{\log\log N}).
\end{align*}
Collecting the previous estimates together, we can overall infer that we expect a contribution from \eqref{contributionfromS1} of roughly at most
\begin{equation}
\label{heuristicmeanXA}
\alpha^4 N^2 (\log N)^{\log 4-1}\exp((\log 2+o(1))\lambda(N)\sqrt{\log\log N}).
\end{equation}
Considering the arbitrariness of $\lambda(N)$, in order to make the above of size $o(\alpha^2 N^2)$ we are led to take $\alpha$ such that
$$\alpha^2=o\bigg(\frac{1}{(\log N)^{\log 4-1}\exp(\lambda(N)\sqrt{\log\log N})}\bigg).$$
In fact, we will show that the stronger condition given in the statement of Theorem \ref{mainthm} already suffices. We then expect those values of $\alpha$, by the discussion at the start of section $4$, to guarantee a corresponding random product set in $B(N,\alpha)$ of maximal size. 
\subsection{Heuristic for the necessary condition in Theorem \ref{mainthm}}
By \eqref{asympmeanconj} we can express the mean value of $X_A$ as the average of 
$$\frac{\alpha^2\tau_N (n)}{2}-1+(1-\alpha^2)^{\tau_N(n)/2}$$
which, from the considerations in the previous subsections, can be roughly seen as 
\begin{equation}
\label{heuristicsummand}
\frac{\alpha^2t(N)}{2}-1+(1-\alpha^2)^{t(N)/2},
\end{equation}
on the set $\mathcal{S}_1$, where for any function $\xi(N)\longrightarrow +\infty$ we define
$$t(N)=t_{\xi}(N):=(\log N)^{\log4 -1}\exp(\xi(N)\sqrt{\log\log N}),$$
which in turn can be considered as an approximation of the normal order of the function $\tau_N(n)$ over the integers $n\leq N^2$.

When $\alpha$ is such that $\alpha^2t(N)\longrightarrow 0$, as $N\longrightarrow +\infty$, we can clearly Taylor expand the binomial in \eqref{heuristicsummand} and this has been crucial to heuristically estimate the mean of $X_A$.

On the other hand, in the case when $\alpha^2t(N)$ is bounded away from $0$, it is clear that the binomial factor in \eqref{heuristicsummand} can now be considered as ``smaller'' than the other factor $\alpha^2t(N)/2-1.$
In other words, in this range of $\alpha$ we no longer experience cancellation in \eqref{heuristicsummand} due to Taylor expansion, but instead is just the term $\alpha^2t(N)/2-1$ to dominate.

Following these lines of thought, when the limit in the statement of Theorem \ref{mainthm} either does not exist or differs from $-\infty$, we first lower bound the mean value of $X_A$ with:
\begin{align}
\label{heuristicnecessarymean}
\gg \sum_{\substack{1\leq n\leq N ^2\\ \omega(n)\ \approx\ 2\log\log N}}\bigg(\frac{\alpha^2\tau_N(n)}{2}-1\bigg)=\sum_{\substack{1\leq n\leq N ^2\\ \omega(n)\ \approx\ 2\log\log N}}\frac{\alpha^2\tau_N(n)}{2}-\sum_{\substack{1\leq n\leq N ^2\\ \omega(n)\ \approx\ 2\log\log N}}1,
\end{align}
where the relation $\omega(n) \approx 2\log\log N$ will be clarified in a moment, and after show that the first sum on the right hand side above dominates with a contribution of $\gg \alpha^2 N^2$.

On the other hand, if even for these choices of $\alpha$ we have a corresponding random product set in $B(N,\alpha)$ with high probability of maximal size, then by Lemma \ref{propasympmean} this would imply $\mathbb{E}[X_A]=o(\alpha^2 N^2)$. In this way we will reach a contradiction and prove the necessary part in Theorem \ref{mainthm}.

As we said above, in order to precisely lower bound the mean value of $X_A$, we need to carefully determine the approximation $\omega(n) \approx 2\log\log N$ mentioned before. In fact, we will consider integers $n\leq N^2$ with $\omega(n)$ slightly inside the tail of its distribution. Roughly speaking and following the notations introduced before, we will take integers such that:
$$2\log\log N+\xi(N)\sqrt{\log\log N}<\omega(n)\leq 2\log\log N+2\xi(N)\sqrt{\log\log N}.$$
By combining results about the distribution of the prime factors counting function around $\log\log N$ or $2\log\log N$, we will be able to show that the second sum on the right hand side of \eqref{heuristicnecessarymean} makes a negligible contribution compared to the first one there, whereas this last one is seen to be of the same order of the complete sum without any restriction, which by \eqref{averagetau0} contributes $\gg \alpha^2 N^2$.
\section{The sufficient condition}
In this section we are going to prove the sufficient condition in Theorem \ref{mainthm}. To set up the argument, let us suppose that $N\alpha \longrightarrow +\infty$, $\alpha\longrightarrow 0$ and consider a random set $A\in B(N,\alpha)$. We know that we can restrict $\alpha$ in this way thanks to Proposition \ref{equalitycase} and the bound \eqref{multiplicationtablebound}.

Let us then define
$$X_A:=\frac{|A|^2+|A|}{2}-|AA|\geq 0.$$
By \eqref{asympmeanmaxsize} we have
$$\mathbb{E}[X_A]=\frac{\mathbb{E}[|A|^2]}{2}-\mathbb{E}[|AA|]+O(N\alpha ).$$
Our aim is to find conditions on $\alpha$ for which the following holds:

for any $\delta,\eps>0$ there exists an $N_0=N_0(\delta,\eps)$ such that 
$$\mathbb{P}(X_A\geq \delta(|A|^2+|A|)/2)\leq \eps\ \ \ (\text{if}\ N\geq N_0).$$
However, since by \eqref{importcons} $|A|^2+|A|\ \sim |A|^2\ \sim (N\alpha)^2$ with probability $1-o(1)$, we can replace inside the above probability the expression $(|A|^2+|A|)/2$ with just $(N\alpha)^2/2,$ without changing the desired estimate.

By Markov's inequality we have
\begin{equation}
\label{Markovsineq}
\mathbb{P}(X_A\geq \delta(N\alpha)^2/2)\leq  \frac{2\mathbb{E}[X_A]}{\delta(N\alpha)^2}.
\end{equation}
From the proof of \cite[Proposition 3.2]{CRR} we have
\begin{align*}
\mathbb{E}[|AA|] =\sum_{1\leq n\leq N^2}\bigg(1-(1-\alpha^2)^{\tau_N(n)/2}\bigg) + O(N\alpha)
\end{align*}
and by \eqref{asympmeanmaxsize} and \eqref{averagetau0} also that
$$\mathbb{E}[|A|^2]=\sum_{1\leq n\leq N^2}\alpha^2\tau_N(n)+O(N\alpha).$$
Putting the above two identities together we can rewrite the mean of $X_A$ as
\begin{equation}
\label{meanXA}
\mathbb{E}[X_A]=\sum_{1\leq n\leq N^2}\bigg(\frac{\alpha^2\tau_N(n)}{2}-1+(1-\alpha^2)^{\tau_N(n)/2}\bigg) + O(N\alpha).
\end{equation}
Following the heuristic considerations in section $4$, we split the sum in \eqref{meanXA} into two parts, according to the proximity of $\Omega(n)$, which counts the number of prime factors of $n$ with multiplicity, to $2\log\log N$. More specifically, let $M$ be a positive real number that will be chosen at the end as sufficiently large in terms of $\delta,\eps$. We then write
\begin{align}
\label{splittingmeanXA}
\mathbb{E}[X_A]&=\sum_{\substack{1\leq n\leq N^2\\ |\Omega(n)-2\log\log N|\leq M\sqrt{\log\log N}}}\bigg(\frac{\alpha^2\tau_N(n)}{2}-1+(1-\alpha^2)^{\tau_N(n)/2}\bigg) \\
&+\sum_{\substack{1\leq n\leq N^2\\ |\Omega(n)-2\log\log N|> M\sqrt{\log\log N}}}\bigg(\frac{\alpha^2\tau_N(n)}{2}-1+(1-\alpha^2)^{\tau_N(n)/2}\bigg) + O(N\alpha).\nonumber
\end{align}
Since $-1+(1-\alpha^2)^{\tau_N(n)/2}\leq 0$, the second sum above is simply bounded by
\begin{align*}
\ll \alpha^2\sum_{\substack{1\leq n\leq N^2\\ |\Omega(n)-2\log\log N|>M\sqrt{\log\log N}}}\tau_N(n).
\end{align*}
By plugging the definition of $\tau_N(n)$ in we get
\begin{align*}
\sum_{\substack{1\leq n\leq N^2\\ |\Omega(n)-2\log\log N|>M\sqrt{\log\log N}}}\tau_N(n)&=\sum_{\substack{1\leq n\leq N^2\\ |\Omega(n)-2\log\log N|>M\sqrt{\log\log N}}}\sum_{\substack{n/N\leq d\leq N\\ d|n}} 1\\
&=\sum_{d\leq N}\sum_{\substack{n\leq Nd\\ d|n\\ |\Omega(n)-2\log\log N|>M\sqrt{\log\log N}}}1\\
&= \sum_{d\leq N}\sum_{\substack{k\leq N\\ |\Omega(d)+\Omega(k)-2\log\log N|>M\sqrt{\log\log N}}}1\\
&\ll \sum_{\substack{d\leq N\\ |\Omega(d)-\log\log N|>\frac{M}{2}\sqrt{\log\log N}}}\sum_{k\leq N}1\\
&\leq N\sum_{\substack{d\leq N\\ |\Omega(d)-\log\log N|>\frac{M}{2}\sqrt{\log\log N}}}1.
\end{align*}
To compute the last sum above we use a variant of the Erd\H{o}s--Kac's theorem, which states that the result of Proposition \ref{Erdoskac} holds with the function $\Omega(n)$ in place of $\omega(n)$ (and that it easily follows from \cite[ch. III, Theorem 4.15]{T}). It derives an upper bound for the second line in \eqref{splittingmeanXA} of:
\begin{equation}
\label{finalcontributionextomega}
\ll\frac{\alpha^2N^2}{M}\exp\bigg(-\frac{M^2}{8}\bigg)+O(N\alpha),
\end{equation}
thanks to the bound \eqref{consequenceErdoskac}. Clearly, we can make \eqref{finalcontributionextomega} $\leq \delta \eps\alpha^2 N^2/4$, say, if $M=M(\delta,\eps)$ is sufficiently large. Also, note that the upper bound \eqref{finalcontributionextomega} essentially matches our heuristic bound \eqref{tailgauss}, where the constant $M$ here replaces the unbounded function $\lambda(N)$ there.

Overall, we have so far proved that
\begin{equation}
\label{halfmeanXA}
\mathbb{E}[X_A]\leq \sum_{\substack{1\leq n\leq N^2\\ |\Omega(n)-2\log\log N|\leq M\sqrt{\log\log N}}}\bigg(\frac{\alpha^2\tau_N(n)}{2}-1+(1-\alpha^2)^{\tau_N(n)/2}\bigg) +\frac{\delta \eps}{4}\alpha^2 N^2.
\end{equation}
By Lemma \ref{Taylorexpexp} we have
$$(1-\alpha^2)^{\tau_N(n)/2}=\exp\bigg(\frac{\tau_N(n)}{2}\log(1-\alpha^2)\bigg)\leq \exp\bigg(-\frac{\alpha^2\tau_N(n)}{2}\bigg)\leq 1-\frac{\alpha^2\tau_N(n)}{2}+\frac{\alpha^4\tau_N(n)^2}{8},$$
which used in the sum in \eqref{halfmeanXA} gives
\begin{equation}
\label{concsecondmomenttau}
\mathbb{E}[X_A]\leq \sum_{\substack{1\leq n\leq N^2\\ |\Omega(n)-2\log\log N|\leq M\sqrt{\log\log N}}}\frac{\alpha^4\tau_N(n)^2}{8}+\frac{\delta \eps}{4}\alpha^2 N^2.
\end{equation}
Note that the above sum is on the double condition 
$$2\log\log N-M\sqrt{\log\log N}\leq \Omega(n)\leq 2\log\log N+M\sqrt{\log\log N}.$$
By raising both members of the rightmost inequality to the power $2$ and letting $z:=1/2$, we may upper bound the sum in \eqref{concsecondmomenttau} with
\begin{equation}
\label{rankinstrick}
\leq \frac{\alpha^4}{8}2^{2\log\log N+M\sqrt{\log\log N}}\sum_{1\leq n\leq N^2}\tau_N(n)^2 z^{\Omega(n)}.
\end{equation}
Plugging the definition of $\tau_N(n)$ in, we find
$$\sum_{1\leq n\leq N^2}\tau_N(n)^2 z^{\Omega(n)}=\sum_{1\leq n\leq N^2}z^{\Omega(n)}\bigg(\sum_{\substack{d|n\\ n/N\leq d\leq N}}1\bigg)^2.$$
By expanding the square and swapping summations we get the above is
\begin{align}
\label{doublesum}
&=\sum_{1\leq n\leq N^2} z^{\Omega(n)}\sum_{\substack{d_1|n\\ n/N\leq d_1\leq N}}1\sum_{\substack{d_2|n\\ n/N\leq d_2\leq N}}1\nonumber\\
&\ll \sum_{1\leq d_1< d_2\leq N}\sum_{\substack{1\leq n\leq Nd_1\\ n\equiv 0\pmod{[d_1,d_2]}}}z^{\Omega(n)}
+\sum_{1\leq d\leq N}\sum_{\substack{1\leq n\leq Nd\\ n\equiv 0\pmod{d}}}z^{\Omega(n)}.
\end{align}
In the second double sum in \eqref{doublesum} we change variable $n=dk$, with $k\leq N$, to make it
\begin{equation}
\label{contribution1}
=\sum_{1\leq d\leq N}z^{\Omega(d)}\sum_{\substack{1\leq k\leq N}}z^{\Omega(k)}\ll N^2(\log N)^{2z-2}=\frac{N^2}{\log N},
\end{equation}
by two applications of Corollary \ref{boundydiv}.

Regarding the first double sum in \eqref{doublesum} we use the following substitution: $d_1=et_1$, $d_2=et_2$ and $n=t_1t_2e k$. We can then upper bound it with
\begin{align}
\label{splitting12}
\leq \sum_{1\leq e\leq N}z^{\Omega(e)}\sum_{1\leq t_2\leq N/e}z^{\Omega(t_2)}\sum_{1\leq t_1<t_2}z^{\Omega(t_1)}\sum_{k\leq N/t_2}z^{\Omega(k)}.
\end{align}
Notice that the condition $t_1<t_2$ forces $t_2\geq 2$. Moreover, $1\leq N/t_2$ implies $2\leq 2N/t_2$. So, two applications of Corollary \ref{boundydiv} make \eqref{splitting12}
\begin{align*}
&\ll N\sum_{1\leq e\leq N}z^{\Omega(e)}\sum_{2\leq t_2\leq N/e}\frac{z^{\Omega(t_2)}}{t_2}(\log(2N/t_2))^{z-1}\sum_{1\leq t_1<t_2}z^{\Omega(t_1)}\\
&\ll N\sum_{1\leq e\leq N}z^{\Omega(e)}\sum_{2\leq t_2\leq N/e}z^{\Omega(t_2)}(\log(2N/t_2))^{z-1}(\log t_2)^{z-1}.
\end{align*}
By swapping summations and by another application of Corollary \ref{boundydiv} the above is
\begin{align*}
&=N\sum_{2\leq t_2\leq N}z^{\Omega(t_2)}(\log(2N/t_2))^{z-1}(\log t_2)^{z-1}\sum_{e\leq N/t_2}z^{\Omega(e)}\\
&\ll N^2 \sum_{2\leq t_2\leq N}\frac{z^{\Omega(t_2)}}{t_2}(\log(2N/t_2))^{2(z-1)}(\log t_2)^{z-1}\\
&=N^2 \sum_{2\leq t\leq N}\frac{1}{2^{\Omega(t)} t\sqrt{\log t} \log(2N/t)}.
\end{align*}
We now pause a moment to understand the behaviour of the last sum above.
\begin{lem}
\label{lempartialsumm}
For any $N\geq 12$ we have
$$ \sum_{2\leq t\leq N/2}\frac{1}{2^{\Omega(t)} t\sqrt{\log t} \log(N/t)}\ll \frac{\log\log N}{\log N}.$$
\end{lem}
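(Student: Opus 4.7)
The plan is to split the range $[2, N/2]$ at $\sqrt{N}$ and control each piece using the single analytic input provided by Corollary \ref{boundydiv} with $y=1/2$, namely
$$\sum_{t\leq x}\frac{1}{2^{\Omega(t)}}\ll \frac{x}{\sqrt{\log x}}\qquad (x\geq 2). \tag{$\ast$}$$
From $(\ast)$, two successive Abel summations give the auxiliary bounds
$$\sum_{2\leq t\leq u}\frac{1}{2^{\Omega(t)}t}\ll \sqrt{\log u},\qquad \sum_{2\leq t\leq u}\frac{1}{2^{\Omega(t)}t\sqrt{\log t}}\ll \log\log u,$$
the first by integration-by-parts of $(\ast)/u^{2}$ against the counting measure, and the second by another partial summation of the first against $1/\sqrt{\log u}$ (both computations reduce to $\int du/(u\log u)\ll \log\log u$). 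Alternatively, Lemma \ref{Rankinestimate0} together with Mertens' formula yields the first bound directly.

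For the \emph{small range} $2\leq t\leq \sqrt{N}$ one has $\log(N/t)\geq \frac{1}{2}\log N$, so the corresponding piece of the sum is at most
$$\frac{2}{\log N}\sum_{2\leq t\leq \sqrt{N}}\frac{1}{2^{\Omega(t)}t\sqrt{\log t}}\ll \frac{\log\log N}{\log N},$$
by the second auxiliary bound above.

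For the \emph{large range} $\sqrt{N}<t\leq N/2$, I first use $\sqrt{\log t}\geq \sqrt{(\log N)/2}$ to pull out a factor $1/\sqrt{\log N}$, and then perform a dyadic decomposition indexed by the size of $N/t$: write $t\in(N/2^{j+1},N/2^{j}]$ for $1\leq j\leq \log_{2}\sqrt{N}$, on which $\log(N/t)\asymp j$ and $t\geq \sqrt{N}/2$. Applying $(\ast)$ on each block and dividing through by the lower bound on $t$ gives
$$\sum_{N/2^{j+1}<t\leq N/2^{j}}\frac{1}{2^{\Omega(t)}t}\ll \frac{1}{\sqrt{\log(N/2^{j})}}\ll \frac{1}{\sqrt{\log N}},$$
since $j\leq \log_{2}\sqrt{N}$ forces $\log(N/2^{j})\geq \tfrac{1}{2}\log N$. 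Summing $1/(j\sqrt{\log N})$ over $j\leq \log_{2}\sqrt{N}$ yields $\ll \log\log N/\sqrt{\log N}$, and the previously extracted $1/\sqrt{\log N}$ promotes this to $\ll \log\log N/\log N$, matching the desired bound.

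There is no real obstacle beyond bookkeeping: the only delicate point is ensuring that the dyadic blocks in the large range cover $(\sqrt{N},N/2]$ without gaps or overlaps (adjusting the endpoint $j=\log_{2}\sqrt{N}$ by $O(1)$), and that the endpoints $t=2$ in the small range and $t=\sqrt{N}$ in the large range are handled consistently; both are cosmetic. Adding the two contributions yields the stated estimate.
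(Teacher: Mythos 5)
Your proof is correct and reaches the stated bound by a genuinely different decomposition from the paper's. Both arguments rest on Corollary \ref{boundydiv} with $y=1/2$ as the analytic input, but where the paper performs a single dyadic decomposition of the whole range $[2,N/2]$, reduces to the series $\sum_k 1/(k\log(N/2^{k+1}))$, and evaluates it by comparison with $\int dt/(t\log(N/2^{t+1}))$ (whose closed-form antiderivative is found by noticing that $\log(N/2^{t+1})+t\log 2$ is constant in $t$), you instead split at $\sqrt N$ and, on each half, kill whichever of the two slowly-varying factors is bounded below there: on $[2,\sqrt N]$ you pull out $\log(N/t)\ge\tfrac12\log N$ and reduce the remainder to a two-step partial-summation estimate, while on $(\sqrt N,N/2]$ you pull out $\sqrt{\log t}\gg\sqrt{\log N}$ and then dyadically decompose in $N/t$, landing on the harmonic sum $\sum_{j\le\log_2\sqrt N}1/j\ll\log\log N$. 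Your route is more modular and avoids the slightly opaque antiderivative, at the price of the auxiliary Abel-summation bounds; the paper's is more uniform, with a single decomposition covering the whole range. One tiny slip in your write-up: in the parenthetical justifying the auxiliary bounds, only the second partial summation reduces to $\int du/(u\log u)\ll\log\log u$, while the first reduces to $\int du/(u\sqrt{\log u})\ll\sqrt{\log u}$; this does not affect the argument.
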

\begin{proof}
To begin with, we split the sum into dyadic intervals:
\begin{align*}
\sum_{2\leq t\leq N/2}\frac{1}{2^{\Omega(t)} t\sqrt{\log t} \log(N/t)}&\leq \sum_{k=1}^{\left\lfloor \frac{\log N}{\log 2}\right\rfloor-2} \sum_{\max\{2,N/2^{k+1}\}< t\leq N/2^k}\frac{1}{2^{\Omega(t)} t\sqrt{\log t} \log(N/t)}\nonumber\\
&\ll \frac{1}{N}\sum_{k=1}^{\left\lfloor \frac{\log N}{\log 2}\right\rfloor-2} \frac{2^k}{k\sqrt{\log(N/2^{k+1})}}\sum_{\max\{2,N/2^{k+1}\}< t\leq N/2^k}\frac{1}{2^{\Omega(t)}}.
\end{align*}
By Corollary \ref{boundydiv} the innermost sum on the second line above is bounded by 
$$\ll \frac{N}{2^k}\frac{1}{\sqrt{\log (N/2^k)}}.$$
Plugging this last estimate in, we find
\begin{align*}
\sum_{2\leq t\leq N/2}\frac{1}{2^{\Omega(t)} t\sqrt{\log t} \log(N/t)}&\ll \sum_{k=1}^{\left\lfloor \frac{\log N}{\log 2}\right\rfloor-2} \frac{1}{k\log(N/2^{k+1})}\\
&\leq\frac{1}{\log(N/4)}+\int_{1}^{\left\lfloor \frac{\log N}{\log 2}\right\rfloor-2}\frac{dt}{t\log(N/2^{t+1})}\\
&=\frac{1}{\log(N/4)}+\frac{\log t-\log\log(N/2^{t+1})}{\log(N/2^{t+1})+t\log 2}\bigg |_{1}^{\left\lfloor \frac{\log N}{\log 2}\right\rfloor-2}\\
&\leq \frac{1}{\log(N/4)}+\frac{\log\log N+O(1)}{\log N+O(1)}+\frac{\log\log(N/4)}{\log(N/4)+\log 2}\\
&\ll \frac{\log\log N}{\log N},
\end{align*}
using that 
$$\bigg\lfloor \frac{\log N}{\log 2}\bigg\rfloor-2=\frac{\log N}{\log 2}+O(1),$$
which proves the lemma.
\end{proof}
With the help of Lemma \ref{lempartialsumm} we can now conclude the estimate of the sum in \eqref{splitting12}, producing for it a bound of 
\begin{equation}
\label{contribution3}
\ll \frac{N^2\log\log N}{\log N}.
\end{equation}
Collecting together \eqref{rankinstrick}, \eqref{contribution1} and \eqref{contribution3}, we have found an overall contribution for the sum in \eqref{concsecondmomenttau} of
\begin{align*}
&\ll\alpha^4 N^2(\log N)^{2\log 2-1}\exp((M\log 2+o(1))\sqrt{\log\log N}).
\end{align*}
Note that it matches our heuristic \eqref{heuristicmeanXA}, where the constant $M$ here replaces the unbounded function $\lambda(N)$ there.

Now suppose that $\alpha$ is such that the quantity:
$$\frac{\log(\alpha^2(\log N)^{\log 4-1})}{\sqrt{\log\log N}}$$
converges as $N\longrightarrow +\infty$ and its limit equals $-\infty$. This is equivalent to say that for any $K>0$ there exists an $N_0=N_0(K)\in \mathbb{N}$ such that for any $N\geq N_0$ we have
$$\alpha^2\leq \frac{1}{(\log N)^{2\log 2-1}\exp(K\sqrt{\log\log N})}.$$
Now, take $K=2M\log 2$ so that the sum in \eqref{concsecondmomenttau} becomes
$$\ll \alpha^2 N^2 \exp((-M\log 2+o(1))\sqrt{\log\log N})$$ 
hence $\leq \delta\eps\alpha^2 N^2/4$, say, if $N$ large enough in terms of $\delta,\eps$. From \eqref{halfmeanXA}, it derives that there exists an $N_0=N_0(\delta,\eps)$ such that for any $N\geq N_0$ we have
$$\mathbb{E}[X_A]\leq \frac{\delta\eps}{2}\alpha^2 N^2.$$
Plugging this into \eqref{Markovsineq} we conclude that
$$\mathbb{P}(X_A\geq \delta(N\alpha)^2/2)\leq \eps,$$
for any $N\geq N_0$, for a sufficiently large $N_0=N_0(\delta,\eps)>0$. This shows the sufficient part in Theorem \ref{mainthm}.
\section{The necessary condition}
In this section we are going to prove the necessary condition in Theorem \ref{mainthm}. 

Let $\alpha\in [0,1)$. We have already noticed that we can confine ourselves with values of $\alpha\longrightarrow 0$ and $N\alpha\longrightarrow +\infty$, thanks to Proposition \ref{equalitycase} and the bound \eqref{multiplicationtablebound}.

Now suppose that we either have that the quantity: 
$$\frac{\log(\alpha^2(\log N)^{\log 4-1})}{\sqrt{\log\log N}}$$
does not converge as $N\longrightarrow +\infty$ or it does, but to a limit different from $-\infty$. 

Then there exists a real number $K$ and a sequence $\{N_k\}_{k\geq 1}$ such that for any $k\geq 1$, we have 
$$\alpha^2\geq \frac{\exp(K\sqrt{\log\log N_k})}{(\log N_k)^{\log 4-1}}.$$
In the following to shorten notations we will indicate with $N$ a generic term of the sequence $N_k$. 

Assume further that even for this choice of $\alpha$ we have a random product set of maximal size, i.e. that $|AA|\ \sim (|A|^2+|A|)/2$ with probability $1-o(1)$, for a random set $A$ in $B(N,\alpha)$.

By Lemma \ref{propasympmean} we deduce that $\mathbb{E}[|AA|]\sim \mathbb{E}[(|A|^2+|A|)/2]$, as $N\longrightarrow +\infty$. Moreover, by the proof of \cite[Proposition 3.2]{CRR} and equations \eqref{asympmeanmaxsize} and \eqref{averagetau0} we can restate this last asymptotic equality as:
\begin{equation}
\label{startingpoint}
\sum_{1\leq n\leq N^2}\bigg(\frac{\alpha^2\tau_N(n)}{2}-1+(1-\alpha^2)^{\tau_N(n)/2}\bigg)=o(N^2\alpha^2).
\end{equation}
The goal is to show that the above sum is larger than a small positive constant times $N^2\alpha^2$, thus contradicting our asymptotic hypothesis for this choice of $\alpha$.

Since by Lemma \ref{Taylorexpexp} we have
$$(1-\alpha^2)^{\tau_N(n)/2}=\exp\bigg(\frac{\tau_N(n)}{2}\log(1-\alpha^2)\bigg)\geq 1+\frac{\tau_N(n)}{2}\log(1-\alpha^2)=1-\frac{\tau_N(n)\alpha^2}{2}+O(\alpha^4 \tau_N(n)),$$
by \eqref{averagetau0} and since $\alpha\longrightarrow 0$, the term inside parenthesis in \eqref{startingpoint} is positive apart from an overall error contribution of $o(\alpha^2 N^2)$. Hence, we can freely discard some unnecessary pieces from the sum to get a lower bound.

In particular, a first lower bound for the sum in \eqref{startingpoint} is given by:
$$\sum_{\substack{1\leq n\leq N^2\\ 2\log\log N+M\sqrt{\log\log N}<\Omega_2(n)\leq 2\log\log N+2M\sqrt{\log\log N}}}\bigg(\frac{\alpha^2\tau_N(n)}{2}-1+(1-\alpha^2)^{\tau_N(n)/2}\bigg),$$
where $M$ is a sufficiently large positive real number that will be chosen later on and we indicate with $\Omega_2(n)$ the function which counts the number of all prime factors of $n$ different form $2$ and counted with multiplicity.

Now, by following our heuristics in subsection 4.6, if we let
\begin{align*}
&h:=2\log\log N+M\sqrt{\log\log N}\\
&m:=2\log\log N+2M\sqrt{\log\log N}
\end{align*}
we further lower bound \eqref{startingpoint} with:
\begin{align}
\label{separatingcontribution}
\sum_{\substack{1\leq n\leq N^2\\ h<\Omega_2(n)\leq m}}\bigg(\frac{\alpha^2\tau_N(n)}{2}-1\bigg)=\frac{\alpha^2}{2}\sum_{\substack{1\leq n\leq N^2\\ h<\Omega_2(n)\leq m}}\tau_N(n)-\sum_{\substack{1\leq n\leq N^2\\ h<\Omega_2(n)\leq m}}1.
\end{align}
The plan is to exhibit a lower bound for the first sum on the right hand side of \eqref{separatingcontribution} and an upper bound for the second one there and compare them. Let us start with the former task. By expanding the definition of $\tau_N(n)$ it is immediate to see that
$$\sum_{\substack{1\leq n\leq N^2\\ h<\Omega_2(n)\leq m}}\tau_N(n)=\sum_{\substack{1\leq n\leq N^2\\ h<\Omega_2(n)\leq m}}\sum_{\substack{d|n\\ n/N\leq d\leq N}}1=\sum_{\substack{1\leq d\leq N}}\sum_{\substack{1\leq k\leq N\\ h<\Omega_2(d)+\Omega_2(k)\leq m}}1,$$
since clearly $\Omega_2(n)$ is still a completely additive function. Moreover, we can lower bound the above with:
$$\geq \sum_{\substack{1\leq d\leq N\\ h/2<\Omega_2(d)\leq m/2}}\sum_{\substack{1\leq k\leq N\\ h/2<\Omega_2(k)\leq m/2}}1=\bigg(\sum_{\substack{1\leq j\leq N\\ h/2<\Omega_2(j)\leq m/2}}1\bigg)^2.$$
To compute the sum into square parenthesis we use a variant of the Erd\H{o}s--Kac's theorem, which states that the result of Proposition \ref{Erdoskac} holds with the function $\Omega_2(n)$ in place of $\omega(n)$ (and that it easily follows from \cite[ch. III, Theorem 4.15]{T}). We deduce that:
$$\sum_{\substack{1\leq j\leq N\\ h/2<\Omega_2(j)\leq m/2}}1=\frac{N}{\sqrt{2\pi}}\int_{M/2}^{M}e^{-t^2/2}dt+ O\bigg(\frac{N}{\sqrt{\log\log N}}\bigg),$$
with a big-Oh constant independent of $M$. 

In conclusion, the first term on the right hand side of \eqref{separatingcontribution} is
\begin{align}
\label{finalcontributionnec1}
&\gg \alpha^2 N^2\bigg(\int_{M/2}^{M}e^{-t^2/2}dt\bigg)^2\\
&\gg_M \frac{N^2}{(\log N)^{\log 4-1}}\exp(K\sqrt{\log\log N}),\nonumber
\end{align}
if $N$ is sufficiently large with respect to $M$ and since $M$ is positive.

On the other hand, we can rewrite the second sum in \eqref{separatingcontribution} as
\begin{equation}
\label{sumPi}
\sum_{h< k\leq m}\Pi(N^2,k),
\end{equation}
where 
$$\Pi(N^2,k):=\sum_{\substack{n\leq N^2\\ \Omega_2(n)=k}}1.$$
Now, we can trivially upper bound $\Pi(N^2,k)$ with 
\begin{align*}
\Pi(N^2,k)\leq \sum_{n\leq N^2}\frac{2^{\Omega_2(n)}}{2^k}\ll \frac{N^2\log N}{2^k},
\end{align*}
thanks to Corollary \ref{boundydiv}, which inserted inside \eqref{sumPi} gives an upper bound for \eqref{sumPi} of:
\begin{align}
\label{finalcontributionnec2}
\ll N^2\log N \sum_{h< k\leq m}\frac{1}{2^k}\ll \frac{N^2\log N}{2^h}\ll \frac{N^2}{(\log N)^{\log 4-1}}\exp((-M\log 2)\sqrt{\log\log N}),
\end{align}
by summing the geometric progression.

By choosing e.g. $M=2|K|/\log 2 +1$, and thanks to \eqref{finalcontributionnec1} and \eqref{finalcontributionnec2}, we have overall showed that
\eqref{separatingcontribution} is
$$\gg_K \alpha^2 N^2,$$
if $N$ large enough in terms of $|K|$, thus contradicting the assertion \eqref{startingpoint} and concluding the proof of the necessary part in Theorem \ref{mainthm}.
\section*{Acknowledgements}
I would like to thank my supervisor Adam J. Harper for some helpful discussions about this problem. Also, I am grateful to Adam and to Carlo Sanna for comments on a first draft of this paper which led to some simplifications in the exposition.

\end{document}